\providecommand{\U}[1]{\protect\rule{.1in}{.1in}}
\newtheorem{theorem}{Theorem}
\theoremstyle{plain}
\newtheorem{corollary}{Corollary}
\newtheorem{example}{Example}
\newtheorem{lemma}{Lemma}
\newtheorem{problem}{Problem}
\newtheorem{remark}{Remark}
\numberwithin{equation}{section}
\begin{document}
\title[ ]{Nonlinear operator extensions of Korovkin's theorems}
\author{Sorin G. Gal}
\address{Department of Mathematics and Computer Science\\
University of Oradea\\
University\ Street No. 1, Oradea, 410087, Romania\\
and Academy of Romanian Scientists, Splaiul Independentei nr. 54, 050094,
Bucharest, Romania}
\email{galso@uoradea.ro, galsorin23@gmail.com}
\author{Constantin P. Niculescu}
\address{Department of Mathematics, University of Craiova\\
Craiova 200585, Romania and The Institute of Mathematics of the Romanian
Academy, Bucharest, Romania}
\email{constantin.p.niculescu@gmail.com}
\date{October 19, 2022}
\subjclass[2000]{41A35, 41A36, 41A63}
\keywords{Korovkin type theorems, monotone operator, sublinear operator, weakly
nonlinear operator, ordered Banach space, Choquet's integral}

\begin{abstract}
In this paper we extend Korovkin's theorem to the context of sequences of
weakly nonlinear and monotone operators defined on certain Banach function
spaces. Several examples illustrating the theory are included.

\end{abstract}
\maketitle

\section{Introduction}

Korovkin's theorem \cite{Ko1953}, \cite{Ko1960} provides a very simple test of
convergence to the identity for any sequence $(L_{n})_{n}$ of positive linear
operators that map $C\left(  [0,1]\right)  $ into itself: the occurrence of
this convergence for the functions $1,~x$ and $x^{2}$. In other words, the
fact that%
\[
\lim_{n\rightarrow\infty}L_{n}(f)=f\text{\quad uniformly on }[0,1]
\]
for every $f\in C\left(  [0,1]\right)  $ reduces to the status of the three
aforementioned functions. Due to its simplicity and usefulness, this result
has attracted a great deal of attention leading to numerous generalizations.
Part of them are included in the authoritative monograph of Altomare and
Campiti \cite{AC1994} and the excellent survey of Altomare \cite{Alt2010}. See
\cite{Alt2021}, \cite{Alt2021b}, \cite{Alt2022} and \cite{Popa2022} for some
very recent contributions.

At the core of Korovkin's theorem (as well as of many of its extensions) is
the nice behavior of the involved functions, reminding the property of
absolute continuity in real analysis. In what follows we refer to it as the
\emph{Korovkin absolute continuity}. For a real-valued function $f$ defined on
a subset $K$ of the Euclidean space $\mathbb{R}^{N}$ this property means the
following a priori estimate: for every $\varepsilon>0$ there is $\delta>0$
such that for all $x,y\in K$,%
\begin{equation}
\left\vert f(x)-f(y)\right\vert \leq\varepsilon+\delta\left\Vert
x-y\right\Vert ^{2}. \label{eq1}%
\end{equation}

Necessarily, such a function is uniformly continuous. The converse fails (for
example, see the case of the function $\sqrt{x}$ on $(0,\infty)),$ but it
occurs for all uniformly continuous and bounded functions $f:K\rightarrow
\mathbb{R}$. Examples of unbounded functions on $\mathbb{R}^{N}$ playing the
property of Korovkin absolute continuity are $\pm\operatorname*{pr}%
\nolimits_{1},...,\pm\operatorname*{pr}\nolimits_{N}$ and $\sum_{k=1}%
^{N}\operatorname*{pr}\nolimits_{k}^{2},$ where $\operatorname*{pr}%
\nolimits_{k}$ denotes the canonical projection on the $k$th coordinate. More
details are available in \cite{N2009}.

Based on the concept of Korovkin absolute continuity, the present authors have
extended Korovkin's theorem to the framework of sublinear and monotone
operators acting on function spaces defined on appropriate subsets $K$ of
$\mathbb{R}^{N}.$ See \cite{Gal-Nic-Med}, \cite{Gal-Nic-Aeq},
\cite{Gal-Nic-RACSAM} and \cite{Gal-Nic-subm}.

The aim of the present paper is to further extend these results to the case of
sequences of nonlinear and monotone operators converging pointwise to an
operator of the same nature, possibly different from the identity. In the
linear case, results of this kind have been obtained (in order) by Wang
\cite{Wang}, Guessab and Schmeisser \cite{GS} and Popa \cite{Popa2022}.
According to Theorem 1 in \cite{Popa2022}, if $L_{n}$ $(n\in\mathbb{N}$) and
$L$ are positive linear operators from $C\left(  [0,1]\right)  $ into itself
such that $L(1)L(x^{2})=\left(  L(x\right)  )^{2}$ and $L(1)(t)>0$ for all
$t\in\lbrack0,1],$ then $L_{n}(f)\rightarrow L(f)$ uniformly for all $f\in
C\left(  [0,1]\right)  $ if and only if
\[
L_{n}(1)\rightarrow L(1),~L_{n}(x)\rightarrow L(x)\text{ and }L_{n}%
(x^{2})\rightarrow L(x^{2})\text{\quad uniformly on }[0,1].
\]

This is extended by our Theorem 3 (Section 5) to a large class of nonlinear
operators defined on a space $C(K),$ with $K$ a compact subset of the
Euclidean space $\mathbb{R}^{N}.$ The possibility to replace $C(K)$ with other
function spaces (such the Lebesgue spaces $L^{P}(K)$ with $p\in\lbrack
1,\infty))$ makes the statement of Theorem 4 (Section 5). Due to the linearity
of \thinspace$L$, the condition $L(1)L(x^{2})=\left(  L(x\right)  )^{2}$ in
Popa's Theorem 1 is invariant under any translation $x\rightarrow x+\alpha$, a
fact that makes its result to work for all spaces $C\left(  [a,b]\right)  $
(not just for $C\left(  [0,1]\right)  ).$ In the nonlinear case, this remark
doesn't work, and in the case of compact subsets $K$ not included in
$\mathbb{R}_{+}^{N}$ we have to adjust the hypotheses by using an analogue of
$L(1)L(\left(  x+\alpha\right)  ^{2})=\left(  L(x+\alpha\right)  )^{2}.$

For the convenience of the reader, we summarized in Section 2 some very basic
facts on ordered Banach spaces, usually omitted by most of the textbooks on
Banach lattices, but essential for understanding our approach. So is Theorem
\ref{thmC(K)}, that motivates why in this paper we consider only operators
with values in a $C(K)$ space.

Section 3 is a thorough presentation of the class nonlinear operators that
makes the subject of this paper. These are sublinear and monotone operators
acting on ordered Banach spaces of functions which verify the property of
translatability relative to the multiples of unity. They was introduced in
\cite{Gal-Nic-RACSAM}, motivated by our interest in Choquet's theory of
integration, but there are many other examples outside that theory, mentioned
in this paper.

Section 4 is devoted to a nonlinear generalization of a result due to Popa
\cite{Popa2022}, which proves to be essential in proving our new Korovkin type
results in Section 5. Examples illustrating our main results are exhibited in
Section 6. The paper ends with a short list of open problems.

\section{Preliminaries on ordered Banach spaces}

Most authors (see Aliprantis and Tourky \cite{AT2007} and Schaefer and Wolff
\cite{SW1999}) define the \emph{ordered Banach spaces} as the real Banach
spaces $E$ endowed with an order relation $\leq$ such that the following three
conditions are verified:%
\begin{align*}
(OBS1)\text{ }x  &  \leq y\text{ implies }x+z\leq y+z\text{ for all }x,y,z\in
E;\text{ and}\\
(OBS2)\text{ }x  &  \leq y\text{ implies }\lambda x\leq\lambda y\text{ for
}x,y\in E\text{ and }\lambda\in\mathbb{R}_{+}=[0,\infty).\\
(OBS3)\text{ }0  &  \leq x\leq y\text{ in }E\text{ implies }\left\Vert
x\right\Vert \leq\left\Vert y\right\Vert .
\end{align*}
As usually, \thinspace$x\leq y$ will be also denoted also $y\geq x$ and $x<y$
(equivalently $y>x$) will mean that $y\geq x$ and $x\neq y.$

For convenience, we will consider in this paper \emph{only} ordered Banch
spaces $E$ whose positive cones are closed (in the norm topology),
\emph{proper} $(-E_{+}\cap E_{+}=\left\{  0\right\}  )$ and \emph{generating}
$(E=E_{+}-E_{+})$.

An ordered vector space $E$ such that every pair of elements $x,y$ admits a
supremum $\sup\{x,y\}$ and an infimum is called a \emph{vector lattice}. In
this case for each $x\in E$ we can define $x^{+}=\sup\left\{  x,0\right\}  $
(the positive part of $x$), $x^{-}=\sup\left\{  -x,0\right\}  $ (the negative
part of $x$) and $\left\vert x\right\vert =\sup\left\{  -x,x\right\}  $ (the
modulus of $x$). $\ $We have $x=x^{+}-x^{-}$ and $\left\vert x\right\vert
=x^{+}+x^{-}.$ A Banach lattice is any real Banach space $E$ which is at the
same time a vector lattice and verifies the condition%
\[
\left\vert x\right\vert \leq\left\vert y\right\vert \text{ implies }\left\Vert
x\right\Vert \leq\left\Vert y\right\Vert .
\]

Most classical Banach spaces are actually Banach lattices. So are the
Euclidean space $\mathbb{R}^{N}$ and the discrete spaces $c_{0},$ $c$ and
$\ell^{p}$ for $1\leq p\leq\infty$ (endowed with the coordinate-wise
ordering). The same is true for the function spaces
\begin{align*}
C(K)  &  =\left\{  f:K\rightarrow\mathbb{R}:f\text{ continuous on the compact
Hausdorff space}K\right\} \\
C_{b}(X)  &  =\left\{  f:X\rightarrow\mathbb{R}:\text{ }f\text{ continuous and
bounded on the metric space }X\right\}  ,\\
\mathcal{U}C_{b}(X)  &  =\left\{  f:X\rightarrow\mathbb{R}:\text{ }f\text{
uniformly continuous and bounded}\right.  \text{ on the}\\
&  \left.  \text{metric space }X\right\}  ,\\
C_{0}(X)  &  =\left\{  f:X\rightarrow\mathbb{R}:\text{ }f\text{ continuous and
null to infinity on the locally }\right. \\
&  \left.  \text{compact Hausdorff space }X\right\}  ,
\end{align*}
each one endowed with the sup-norm $\left\Vert f\right\Vert _{\infty}=\sup
_{x}\left\vert f(x)\right\vert $ and the pointwise ordering. Other Banach
lattices of an utmost interest are the Lebesgue spaces $L^{p}(\mu)$
($p\in\lbrack1,\infty]),$ endowed with norm%
\[
\left\Vert f\right\Vert _{p}=\left\{
\begin{array}
[c]{cl}%
\left(  \int_{X}\left\vert f(x)\right\vert ^{p}\mathrm{d}\mu(x)\right)  ^{1/p}
& \text{if }p\in\lbrack1,\infty)\\
\operatorname*{esssup}\limits_{x\in X}\left\vert f(x)\right\vert  & \text{if
}p=\infty
\end{array}
\right.
\]
and the pointwise ordering modulo null sets.

It is well known that all norms on the $N$-dimensional real vector space
$\mathbb{R}^{N}$ are equivalent. When endowed with the sup norm and the
coordinate wise ordering, $\mathbb{R}^{N}$ can be identified (algebraically,
isometrically and in order) with the Banach lattice $C\left(  \left\{
1,...,N\right\}  \right)  $, where $\left\{  1,...,N\right\}  $ carries the
discrete topology.

A convenient way to emphasize the properties of ordered Banach spaces is that
described by Davies in \cite{Davies1968}. Davies calls an ordered Banach space
$E$ \emph{regularly ordered }if%
\[
\left\Vert x\right\Vert =\inf\left\{  \left\Vert y\right\Vert :y\in E,\text{
}-y\leq x\leq y\right\}  \text{\quad for all }x\in E.
\]
This class of spaces brings together the Banach lattices and some other spaces
which are not vector lattices, a notorious example being $\operatorname*{Sym}%
(n,\mathbb{R)}$, the ordered Banach space of all $n\times n$-dimensional
symmetric matrices with real coefficients with the ordering%
\[
A\leq B\text{ if and only if }\langle Ax,x\rangle\leq\langle Bx,x\rangle
\]
and the norm%
\[
\left\Vert A\right\Vert =\sup_{\left\Vert x\right\Vert \leq1}\left\vert
\langle Ax,x\rangle\right\vert .
\]
For details, see \cite{NP2018}, Section 2.5, pp. 97-103.

\begin{lemma}
\label{lem1}Every ordered Banach space can be renormed by an equivalent norm
to become a regularly ordered Banach space.
\end{lemma}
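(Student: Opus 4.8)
The plan is to renorm $E$ by its \emph{regularization}
\[
\left\Vert x\right\Vert _{r}=\inf\left\{  \left\Vert y\right\Vert :y\in
E,\ -y\leq x\leq y\right\}  ,
\]
and to show this is an equivalent norm under which $E$ becomes regularly
ordered. First I would record that the infimum is over a nonempty set: since
$E_{+}$ is generating, write $x=u-v$ with $u,v\in E_{+}$; then $y=u+v$
satisfies $-y\leq x\leq y$, because $x+y=2u\geq0$ and $y-x=2v\geq0$. Thus
$\left\Vert x\right\Vert _{r}<\infty$. That $\left\Vert \cdot\right\Vert _{r}$
is a seminorm is routine: positive homogeneity comes from (OBS2), the identity
$\left\Vert -x\right\Vert _{r}=\left\Vert x\right\Vert _{r}$ from the
equivalence $-y\leq x\leq y\Longleftrightarrow-y\leq-x\leq y$ (a consequence of
(OBS1)), and subadditivity from adding two admissible pairs of inequalities via
(OBS1). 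Finally, $\left\Vert x\right\Vert _{r}=0$ forces $x=0$: a sequence
$y_{n}\rightarrow0$ with $-y_{n}\leq x\leq y_{n}$ gives $y_{n}-x\rightarrow-x$
and $y_{n}+x\rightarrow x$, both limits of positive elements, so $x\in
E_{+}\cap(-E_{+})=\{0\}$ since $E_{+}$ is closed and proper.

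For the equivalence, one inequality is elementary. If $-y\leq x\leq y$ then
$y\geq0$ (because $-y\leq y$) and $0\leq x+y\leq2y$, so (OBS3) gives
$\left\Vert x\right\Vert \leq\left\Vert x+y\right\Vert +\left\Vert
y\right\Vert \leq3\left\Vert y\right\Vert $; taking the infimum, $\left\Vert
x\right\Vert \leq3\left\Vert x\right\Vert _{r}$. The reverse bound
$\left\Vert x\right\Vert _{r}\leq C\left\Vert x\right\Vert $ is the heart of
the argument, and it is the only place completeness enters. It is equivalent to
the classical fact (a Klee--And\^{o}-type decomposition theorem, proved by a
Baire category and successive-approximation argument in the style of the open
mapping theorem) that a closed generating cone in a Banach space is nonflat:
there is $C>0$ such that every $x$ admits a representation $x=u-v$ with
$u,v\in E_{+}$ and $\left\Vert u\right\Vert ,\left\Vert v\right\Vert \leq
C\left\Vert x\right\Vert $. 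Indeed, with $B$ the closed unit ball, the convex
symmetric set $D=(B\cap E_{+})-(B\cap E_{+})$ satisfies $E=\bigcup_{n}nD$, so
Baire's theorem puts some ball $\rho B$ inside $\overline{D}$, and the usual
iteration converts this into an honest decomposition with $\left\Vert
u\right\Vert ,\left\Vert v\right\Vert \leq(2/\rho)\left\Vert x\right\Vert $;
then $y=u+v$ gives $\left\Vert x\right\Vert _{r}\leq(4/\rho)\left\Vert
x\right\Vert $. I expect this decomposition step to be the only genuine
obstacle; the rest is bookkeeping.

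It remains to verify that $\left\Vert \cdot\right\Vert _{r}$ is regularly
ordered, i.e. $\left\Vert x\right\Vert _{r}=\inf\{\left\Vert y\right\Vert
_{r}:-y\leq x\leq y\}$. The point is that any admissible $y$ satisfies $y\geq0$,
hence $-y\leq y\leq y$, so $\left\Vert y\right\Vert _{r}\leq\left\Vert
y\right\Vert $; taking the infimum over such $y$ yields ``$\leq$''. For
``$\geq$'', if $-y\leq x\leq y$ and $-z\leq y\leq z$, then transitivity gives
$-z\leq x\leq z$, so $\left\Vert x\right\Vert _{r}\leq\left\Vert z\right\Vert
$; the infimum over $z$ gives $\left\Vert x\right\Vert _{r}\leq\left\Vert
y\right\Vert _{r}$, and the infimum over $y$ gives ``$\geq$''. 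Since an
equivalent renorming preserves completeness and leaves the (closed, proper,
generating) cone untouched, (OBS1)--(OBS2) persist and (OBS3) is re-checked by
the same computation as above; hence $(E,\left\Vert \cdot\right\Vert _{r})$ is
a regularly ordered Banach space.
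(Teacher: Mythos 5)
Your argument is correct and complete: the paper itself gives no proof of this lemma, delegating it to Namioka's memoir, and what you have written is precisely the standard argument found there --- regularize the norm by $\left\Vert x\right\Vert _{r}=\inf\{\left\Vert y\right\Vert :-y\leq x\leq y\}$, use closedness and properness of the cone for definiteness, and invoke the Klee--And\^{o} nonflatness of a closed generating cone (Baire category plus the open-mapping iteration, which is indeed the only point where completeness is used) for the nontrivial half of the equivalence; this decomposition is exactly the content of Lemma \ref{lem2}~$(a)$, which the paper derives \emph{from} Lemma \ref{lem1} rather than the other way around, so your route also makes that logical dependence transparent. The concluding verification that $\left\Vert \cdot\right\Vert _{r}$ is its own regularization, and that (OBS1)--(OBS3) persist, is correct as stated.
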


For details, see Namioka \cite{Nam}. Some other useful properties of ordered
Banach spaces are listed below.

\begin{lemma}
\label{lem2}Suppose that $E$ is a regularly ordered Banach space. Then:

$(a)$ There exists a constant $C>0$ such that every element $x\in E$ admits a
decomposition of the form $x=u-v$ where $u,v\in E_{+}$ and $\left\Vert
u\right\Vert ,\left\Vert v\right\Vert \leq C\left\Vert x\right\Vert .$

$(b)$ The dual space of $E,$ $E^{\ast},$ when endowed with the dual cone
\[
E_{+}^{\ast}=\left\{  x^{\ast}\in E^{\ast}:x^{\ast}(x)\geq0\text{ for all
}x\in E_{+}\right\}
\]
is a regularly ordered Banach space.

$(c)\ x\leq y$ in $E$ is equivalent to $x^{\ast}(x)\leq x^{\ast}(y)$ for all
$x^{\ast}\in E_{+}^{\ast}.$

$(d)$ $\left\Vert x\right\Vert =\sup\left\{  x^{\ast}(x):x^{\ast}\in
E_{+}^{\ast},\text{ }\left\Vert x^{\ast}\right\Vert \leq1\right\}  $ for all
$x\in E_{+}.$
\end{lemma}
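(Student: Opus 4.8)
The plan is to handle the four items in the order (a), (d), (c), (b): items (a), (d), (c) admit short self-contained proofs, while the genuinely delicate content gets packaged into (b). For \emph{(a)} I would work straight from the definition of a regular order, starting from the observation that if $-y\leq x\leq y$ then automatically $y\in E_{+}$: indeed $2y=(y+x)+(y-x)$ is a sum of two elements of $E_{+}$, hence lies in $E_{+}$, and then $y\in E_{+}$ by (OBS2). Given $x\neq 0$, use $\Vert x\Vert=\inf\{\Vert y\Vert:-y\leq x\leq y\}$ to pick such a $y$ with $\Vert y\Vert\leq 2\Vert x\Vert$, and put $u=\frac{1}{2}(y+x)$, $v=\frac{1}{2}(y-x)$. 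Then $u,v\in E_{+}$, $x=u-v$, and $0\leq u\leq y$, $0\leq v\leq y$, so (OBS3) gives $\Vert u\Vert,\Vert v\Vert\leq\Vert y\Vert\leq 2\Vert x\Vert$; the case $x=0$ is trivial. Thus $C=2$ works (in fact any $C>1$ does).

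For \emph{(d)} I would introduce the functional $p(z)=\inf\{\Vert u\Vert:u\in E_{+},\ u\geq z\}$ on $E$. It is finite (because $E=E_{+}-E_{+}$) and sublinear (subadditivity from $u_{1}\geq z_{1},u_{2}\geq z_{2}\Rightarrow u_{1}+u_{2}\geq z_{1}+z_{2}$); by (OBS3) it agrees with $\Vert\cdot\Vert$ on $E_{+}$; it is $\leq 0$ on $-E_{+}$ (take $u=0$); and, using regularity, $p\leq\Vert\cdot\Vert$ on all of $E$, since for each $z$ one can choose $y\in E_{+}$ with $-y\leq z\leq y$ and $\Vert y\Vert$ as close to $\Vert z\Vert$ as desired, and such a $y$ competes in the infimum. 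Fixing $x\in E_{+}\setminus\{0\}$, the functional $tx\mapsto t\Vert x\Vert$ on $\mathbb{R}x$ is dominated by $p$, so Hahn--Banach yields $x^{\ast}\in E^{\ast}$ with $x^{\ast}\leq p$ and $x^{\ast}(x)=\Vert x\Vert$; domination then forces $x^{\ast}(-z)\leq p(-z)\leq 0$ for $z\in E_{+}$ (so $x^{\ast}\in E_{+}^{\ast}$) and $\lvert x^{\ast}(z)\rvert\leq\Vert z\Vert$ for all $z$ (so $\Vert x^{\ast}\Vert\leq 1$). This makes the supremum in (d) a maximum equal to $\Vert x\Vert$; the opposite inequality $x^{\ast}(x)\leq\Vert x^{\ast}\Vert\,\Vert x\Vert\leq\Vert x\Vert$ is automatic, and $x=0$ is trivial.

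For \emph{(c)} the implication "$\Rightarrow$" is the definition of $E_{+}^{\ast}$; for "$\Leftarrow$", if $y-x\notin E_{+}$ then, since $E_{+}$ is closed and convex, Hahn--Banach separation gives $\phi\in E^{\ast}$ with $\phi(y-x)<\inf_{z\in E_{+}}\phi(z)$, and because $E_{+}$ is a cone this infimum is $0$ and $\phi\in E_{+}^{\ast}$, so $\phi(y-x)<0$ contradicts the hypothesis. For \emph{(b)}: $E_{+}^{\ast}=\bigcap_{z\in E_{+}}\{x^{\ast}:x^{\ast}(z)\geq 0\}$ is an intersection of closed half-spaces, hence closed, and it is proper since $E=E_{+}-E_{+}$ shows any $x^{\ast}$ with $\pm x^{\ast}\in E_{+}^{\ast}$ must vanish. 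The remaining assertions — that $E_{+}^{\ast}$ is generating and that the dual norm is again regular — are the hard part, and I would derive them from the And\^{o}--Namioka duality between generating and normal cones: the bounded decomposition of $E_{+}$ from (a) dualizes to the normality of $E_{+}^{\ast}$, the $1$-normality of $E_{+}$ recorded in (OBS3) dualizes to $E_{+}^{\ast}$ being generating, and regularity is transferred to the dual norm; for the quantitative statement I would cite \cite{NP2018}, Section~2.5, pp.~97--103 (see also \cite{Davies1968}, \cite{Nam}). I expect this last step to be the main obstacle: a direct Riesz-type construction of a "positive part" $\phi^{+}(z)=\sup\{\phi(u):0\leq u\leq z\}$ need not be additive because $E$ is not assumed to have the decomposition (interpolation) property, so one genuinely has to invoke the cone-duality theory rather than argue by hand.
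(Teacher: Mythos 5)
Your proof is correct and follows essentially the same route as the paper, whose own ``proof'' is just a list of citations: (a) read off from the definition of regular ordering, (c) and (d) via Hahn--Banach separation/extension (the paper points to Niculescu--Persson and Schaefer--Wolff for exactly these arguments), and (b) via the Davies--Namioka--And\^{o} cone-duality theory (the paper cites Davies, Lemma 2.4). Your explicit arguments for (a), (c) and (d) are the standard ones behind those references, and your deferral of the generating/normality duality in (b) to the literature matches what the paper does.
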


\begin{proof}
The assertion $(a)$ follows immediately from Lemma \ref{lem1}. For $(b)$, see
Davies \cite{Davies1968}, Lemma 2.4. The assertion $(c)$ is an easy
consequence of the Hahn-Banach separation theorem; see \cite{NP2018}, Theorem
2.5.3, p. 100.

The assertion $(d)$ is also a consequence of the Hahn-Banach separation
theorem; see \cite{SW1999}, Theorem 4.3, p. 223.
\end{proof}

The concept of strictly positive function admits a natural extension in the
framework of ordered Banach spaces. Precisely, a positive element of an
ordered Banach space $E$ is said to be \emph{strictly positive }provided that
\[
x^{\ast}(x)>0\text{ for every nonzero functional }x^{\ast}\in E_{+}^{\ast}.
\]

Strictly positive elements exists in every separable ordered Banach space $E$.
For example, choose a sequence $(x_{n})_{n}$ of elements of $E,$ dense in the
closed unit ball and consider their decomposition $x_{n}=u_{n}-v_{n}$
according to Lemma \ref{lem2} (a). Then $x=\sum2^{-n}(u_{n}+v_{n})$ is a
strictly positive element since any functional $x^{\ast}\in E_{+}^{\ast}$
vanishing at $x$ will vanish on all elements $x_{n}.$ By density, this implies
that $x^{\ast}=0.$ Actually a stronger results holds, precisely, the set of
all strictly positive elements of a separable ordered Banach space is dense
into the positive cone. See \cite{SW1999}, Theorem 7.6, p. 241.

The spaces $C(K)$, $C_{b}(X)$ and $\mathcal{U}C_{b}(X)$ (separable or not)
admit strictly positive elements with stronger properties, called order units.
Recall that an element $u>0$ of an ordered Banach space $E$ is called an
\emph{order unit} if for each for each $x\in E$ there exists a real number
$\lambda>0$ such that $-\lambda u\leq x\leq\lambda u.$ Necessarily, an order
unit is a strictly positive element. Besides the three aforementioned spaces,
the case of the space $\operatorname*{Sym}(n,\mathbb{R)}$ outlines the
importance of regularly ordered Banach spaces with an order unit whose norms
are associated to the unit via the formula%
\[
\left\Vert x\right\Vert _{u}=\inf\left\{  \lambda>0:-\lambda u\leq
x\leq\lambda u\right\}  ,\text{\quad}x\in E.
\]

The natural order unit of the vector lattice $\mathbb{R}^{N}$ is the vector
$u,$ whose all components equal 1. The norm associated to this unit is the sup-norm.

An example of Banach lattice without strictly positive elements is $C_{0}(X)$,
provided that $X$ is a nonseparable metric space.

The following result explains why in this paper we consider only operators
with values in $C(K)$ space.

\begin{theorem}
\label{thmC(K)}Every ordered Banach space $E$ can be represented as a vector
subspace of the space $C(K)$ of continuous real-valued functions on a compact
Hausdorff space $K$ via an order-preserving linear and continuous map
$\Phi:E\rightarrow C(K).$ If $E$ has a strictly positive element $e,$ then one
can choose $K$ such that $\Phi(e)=1,$ the unity of $C(K).$
\end{theorem}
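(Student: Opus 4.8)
The plan is to realize $K$ as a weak*-compact set of positive functionals on $E$ and to take for $\Phi$ a suitably normalized evaluation map. Invoking Lemma \ref{lem1}, we may replace the norm of $E$ by an equivalent one and so assume throughout that $E$ is regularly ordered; this costs nothing, since continuity of a linear map and the identity $\Phi(e)=1$ are insensitive to passing to an equivalent norm, and in particular the conclusions of Lemma \ref{lem2} then become available.

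For the first assertion I would take $K=\{x^{\ast}\in E_{+}^{\ast}:\left\Vert x^{\ast}\right\Vert \leq1\}$ with the weak* topology. The dual cone $E_{+}^{\ast}$ is an intersection of weak*-closed half-spaces, hence weak*-closed, so $K$ is a weak*-closed subset of the unit ball of $E^{\ast}$ and thus a compact Hausdorff space by Banach--Alaoglu. Defining $\Phi(x)(x^{\ast})=x^{\ast}(x)$, linearity is clear, $\left\Vert \Phi(x)\right\Vert _{\infty}=\sup_{x^{\ast}\in K}\left\vert x^{\ast}(x)\right\vert \leq\left\Vert x\right\Vert $ gives continuity, and $\Phi$ is order-preserving because $x\geq0$ forces $x^{\ast}(x)\geq0$ for every $x^{\ast}\in E_{+}^{\ast}$. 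For injectivity: if $\Phi(x)=0$ then, since every positive functional is a nonnegative multiple of one in $K$, we get $x^{\ast}(x)=0$ for all $x^{\ast}\in E_{+}^{\ast}$; writing $x=u-v$ with $u,v\in E_{+}$ (the cone is generating), Lemma \ref{lem2}(c) yields $u\leq v$ and $v\leq u$, hence $u=v$ since the cone is proper, so $x=0$.

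For the second assertion, given a strictly positive element $e$, I would set $\delta=\left\Vert e\right\Vert /2>0$ and take $K=\{x^{\ast}\in E_{+}^{\ast}:\left\Vert x^{\ast}\right\Vert \leq1,\ x^{\ast}(e)\geq\delta\}$. Exactly as above $K$ is weak*-compact, and it is nonempty because, by Lemma \ref{lem2}(d), $\sup\{x^{\ast}(e):x^{\ast}\in E_{+}^{\ast},\ \left\Vert x^{\ast}\right\Vert \leq1\}=\left\Vert e\right\Vert >\delta$. On $K$ the evaluation $x^{\ast}\mapsto x^{\ast}(e)$ is weak*-continuous and bounded below by $\delta$, so $\Phi(x)(x^{\ast})=x^{\ast}(x)/x^{\ast}(e)$ defines a map $\Phi\colon E\to C(K)$ which is linear, satisfies $\left\Vert \Phi(x)\right\Vert _{\infty}\leq\left\Vert x\right\Vert /\delta$ (hence is continuous), is order-preserving (as $x^{\ast}(x)\geq0$ whenever $x\geq0$), and has $\Phi(e)=1$ by construction.

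The one step that is not a routine verification is the injectivity of this last $\Phi$, equivalently the fact that $K$ separates the points of $E$, and I expect this to be the main obstacle. Given $x\neq0$, one first picks --- by Hahn--Banach together with the argument of the previous paragraph --- a functional $\phi\in E_{+}^{\ast}$ with $\left\Vert \phi\right\Vert =1$ and $\phi(x)\neq0$, and, by Lemma \ref{lem2}(d), a functional $\psi_{0}\in E_{+}^{\ast}$ with $\left\Vert \psi_{0}\right\Vert \leq1$ and $\psi_{0}(e)$ arbitrarily close to $\left\Vert e\right\Vert $. Then $\psi=\psi_{0}+\varepsilon\phi\in E_{+}^{\ast}$ has $\psi(e)\geq\psi_{0}(e)$ and $\left\Vert \psi\right\Vert \leq1+\varepsilon$, and for all but at most one $\varepsilon>0$ it also has $\psi(x)=\psi_{0}(x)+\varepsilon\phi(x)\neq0$; choosing such an $\varepsilon$ small enough and rescaling $\psi$ by $\max\{1,\left\Vert \psi\right\Vert \}$ produces an element of $K$ that does not vanish at $x$. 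This shows $\Phi$ is one-to-one and completes the sketch; the delicate point is precisely the simultaneous requirement that the separating functional both detect $x$ and stay bounded away from zero on $e$.
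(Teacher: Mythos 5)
Your argument is correct, and while the first half coincides with the paper's (the weak*-compact set $K=\{x^{\ast}\in E_{+}^{\ast}:\|x^{\ast}\|\leq1\}$, the evaluation map, and Lemma \ref{lem2}(c)--(d) for positivity and injectivity), your treatment of the second assertion takes a genuinely different route. The paper observes that $\Phi(e)$ is a strong order unit and then invokes Kadison's representation theorem to produce a new compact space on which the unit goes to the constant function $1$; this is quick but outsources the work to a nontrivial external theorem, and it tacitly upgrades the strictly positive element $e$ to an order unit (note that $\Phi(e)$ vanishes at $0\in K$, so some care is needed in locating the space on which it is a unit). You instead build the representation by hand: you cut $K$ down to $\{x^{\ast}\in E_{+}^{\ast}:\|x^{\ast}\|\leq1,\ x^{\ast}(e)\geq\|e\|/2\}$ and normalize the evaluation by $x^{\ast}(e)$, which makes $\Phi(e)=1$ automatic and reduces everything to the one nontrivial point you correctly isolate, namely that this smaller $K$ still separates points; your perturbation argument $\psi=\psi_{0}+\varepsilon\phi$ (using $\phi(e)\geq0$ to keep $\psi(e)$ large, the affine dependence on $\varepsilon$ to keep $\psi(x)\neq0$, and a rescaling controlled by taking $\psi_{0}(e)$ close enough to $\|e\|$ and $\varepsilon$ small) settles it. What your approach buys is a self-contained, elementary proof that uses only Lemma \ref{lem2} and works directly from strict positivity; what it gives up is that the resulting $\Phi$ is only order-preserving rather than an order embedding (the restricted $K$ need not reflect the order), whereas Kadison's theorem yields an order isomorphism onto the image --- but the statement as written asks only for an order-preserving map, so this is not a defect.
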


\begin{proof}
According to the Alaoglu theorem, the set $K=\left\{  x^{\ast}\in E_{+}^{\ast
}:\left\Vert x^{\ast}\right\Vert \leq1\right\}  $ is compact relative to the
$w^{\ast}$ topology. Taking into account the assertions $(c)$ and $(d)$ of
Lemma \ref{lem2} one can easily conclude that $E$ embeds into $C(K)$ via the
positive linear isometry
\[
\Phi:E\rightarrow C(K),\text{\quad}\left(  \Phi(x)\right)  (x^{\ast})=x^{\ast
}(x).
\]
For the second part of Theorem \ref{thmC(K)}, notice that $\Phi(e)$ is a
strong order unit for $C(K).$ Then the conclusion follows from a classical
result due to Kadison \cite{Kad}, stating that every ordered real vector space
with an strong order unit can be represented as a vector subspace of the space
of continuous real-valued functions on a compact Hausdorff space via an
order-preserving map that carries the order unit to the constant function 1.
\end{proof}

\section{Weakly nonlinear operators acting on ordered Banach spaces}

Our next goal is to describe a class of nonlinear operators which provides a
convenient framework for the extension of Korovkin's theorem.

Given a metric space $X,$ we attach to it the vector lattice $\mathcal{F}(X)$
of all real-valued functions defined on $X$, endowed with the metric $d$ and
the pointwise ordering.

Suppose that $X$ and $Y$ are two metric spaces and $E$ and $F$ are
respectively ordered vector subspaces (or subcones of the positive cones) of
$\mathcal{F}(X)$ and $\mathcal{F}(Y)$ and that $\mathcal{F}(X)$ contains the
unity. An operator $T:E\rightarrow F$ is said to be a \emph{weakly nonlinear}
if it satisfies the following two conditions:

\begin{enumerate}
\item[(SL)] (\emph{Sublinearity}) $T$ is subadditive and positively
homogeneous, that is,%
\[
T(f+g)\leq T(f)+T(g)\quad\text{and}\quad T(\alpha f)=\alpha T(f)
\]
for all $f,g$ in $E$ and $\alpha\geq0;$

\item[(TR)] (\emph{Translatability}) $T(f+\alpha\cdot1)=T(f)+\alpha T(1)$ for
all functions $f\in E$ and all numbers $\alpha\geq0.$
\end{enumerate}

In the case when $T$ is \emph{unital} (that is, $T(1)=1)$ the condition of
translatability takes the form%
\[
T(f+\alpha\cdot1)=T(f)+\alpha1,
\]
for all $f\in E$ and $\alpha\geq0.$

A stronger condition than translatability is

\begin{enumerate}
\item[(TR$^{\ast}$)] (\emph{Strong translatability}) $T(f+\alpha
\cdot1)=T(f)+\alpha T(1)$ for all functions $f\in E$ and all numbers
$\alpha\in\mathbb{R}.$
\end{enumerate}

The last condition occurs naturally in the context of Choquet's integral,
being a consequence of what is called there the property of \emph{comonotonic
additivity}, that is,

\begin{enumerate}
\item[(CA)] $T(f+g)=T(f)+T(g)$ whenever the functions $f,g\in E$ are
comonotone in the sense that%
\[
(f(s)-f(t))\cdot(g(s)-g(t))\geq0\text{\quad for all }s,t\in X.
\]
See \cite{Gal-Nic-Aeq} and \cite{Gal-Nic-JMAA}, as well as the references therein.
\end{enumerate}

In this paper we are especially interested in those weakly nonlinear operators
which preserve the ordering, that is, which verify the following condition:

\begin{enumerate}
\item[(M)] (\emph{Monotonicity}) $f\leq g$ in $E$ implies $T(f)\leq T(g)$ for
all $f,g$ in $E.$
\end{enumerate}

\begin{remark}
\label{rem1} If $T$ is a weakly nonlinear and monotone operator, then
\[
T(\alpha\cdot1)=\alpha\cdot T(1)\text{\quad for all }\alpha\in\mathbb{R}.
\]
Indeed, for $\alpha\geq0$ the property follows from positive homogeneity.
Suppose now that $\alpha<0.$ Since $T(0)=0$ and $-\alpha>0$, by
translatability it follows that $0=T(0)=T(\alpha\cdot1+(-\alpha\cdot
1))=T(\alpha\cdot1)+(-\alpha)T(1)$, which implies $T(\alpha)=\alpha T(1)$.
\end{remark}

Examples weakly nonlinear and monotone operators can be found in
\cite{Gal-Nic-Aeq}, \cite{Gal-Nic-JMAA} and \cite{Gal-Nic-RACSAM}.

Suppose that $E$ and $F$ are two ordered Banach spaces and $T$ $:E\rightarrow
F$ is an operator (not necessarily linear or continuous).

If $T$ is positively homogeneous, then
\[
T(0)=0.
\]
As a consequence,
\[
-T(-f)\leq T(f)\text{\quad for all }f\in E
\]
and every positively homogeneous and monotone operator $T$ maps positive
elements into positive elements, that is,%
\begin{equation}
Tf\geq0\text{\quad for all }f\geq0. \label{pos-op}%
\end{equation}
Therefore, for linear operators the property (\ref{pos-op}) is equivalent to monotonicity.

Every sublinear operator is convex and a convex operator is sublinear if and
only if it is positively homogeneous.

The \emph{norm} of a continuous sublinear operator $T:E\rightarrow F$ can be
defined via the formulas%
\begin{align*}
\left\Vert T\right\Vert  &  =\inf\left\{  \lambda>0:\left\Vert T\left(
f\right)  \right\Vert \leq\lambda\left\Vert f\right\Vert \text{ for all }f\in
E\right\} \\
&  =\sup\left\{  \left\Vert T(f)\right\Vert :f\in E,\text{ }\left\Vert
f\right\Vert \leq1\right\}  .
\end{align*}
A sublinear operator may be discontinuous, but when it is continuous, it is
Lipschitz continuous. More precisely, if $T:E\rightarrow F$ is a continuous
sublinear operator, then
\[
\left\Vert T\left(  f\right)  -T(g)\right\Vert \leq2\left\Vert T\right\Vert
\left\Vert f-g\right\Vert \text{\quad for all }f\in E.
\]

Remarkably, all sublinear and monotone operators are Lipschitz continuous:

\begin{theorem}
\label{thmKrein}Every sublinear and monotone operator $T$ $:E\rightarrow F$
verifies the inequality
\[
\left\vert T(f)-T(g)\right\vert \leq T\left(  \left\vert f-g\right\vert
\right)  \text{\quad for all }f,g\in E
\]
and thus it is Lipschitz continuous with Lipschitz constant equals to
$\left\Vert T\right\Vert ,$ that is,
\[
\left\Vert T(f)-T(g)\right\Vert \leq\left\Vert T\right\Vert \left\Vert
f-g\right\Vert \text{\quad for all }f,g\in E.
\]

\end{theorem}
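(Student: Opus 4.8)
The argument splits into three ingredients, which I would treat in order: (1) the ``Krein inequality'' $|T(f)-T(g)|\le T(|f-g|)$, a purely order-theoretic fact depending only on subadditivity and monotonicity; (2) the finiteness of $M:=\sup\{\|T(w)\|:w\in E,\ w\ge0,\ \|w\|\le1\}$, which is the substantive point since $T$ is not assumed continuous a priori; and (3) the identification $M=\|T\|$, from which the estimate follows.

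For (1), fix $f,g\in E$. Subadditivity applied to $f=(f-g)+g$ gives $T(f)\le T(f-g)+T(g)$, hence $T(f)-T(g)\le T(f-g)$; since $f-g\le|f-g|$, monotonicity yields $T(f-g)\le T(|f-g|)$, so $T(f)-T(g)\le T(|f-g|)$. Exchanging $f$ and $g$ and using $|g-f|=|f-g|$ gives $T(g)-T(f)\le T(|f-g|)$ as well; since both $T(f)-T(g)$ and its opposite are majorized by $T(|f-g|)$, we conclude $|T(f)-T(g)|\le T(|f-g|)$, which is consistent with $T(|f-g|)\ge0$ (itself a consequence of $|f-g|\ge0$ and (\ref{pos-op})).

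For (2) --- the main obstacle --- I would argue by contradiction, exploiting the closedness of the positive cone through a gliding-hump construction. If $M=\infty$, choose $x_{n}\ge0$ with $\|x_{n}\|\le1$ and $\|T(x_{n})\|\ge n\,2^{n}$, and set $x=\sum_{n\ge1}2^{-n}x_{n}$; the series converges in $E$, and since $E_{+}$ is closed one has $x\ge2^{-n}x_{n}\ge0$ for every $n$. Monotonicity and positive homogeneity then give $0\le2^{-n}T(x_{n})\le T(x)$, so (OBS3) forces $n\le2^{-n}\|T(x_{n})\|\le\|T(x)\|$ for all $n$, which is absurd since $T(x)$ is a fixed element of $F$. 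Hence $M<\infty$, and positive homogeneity upgrades this to $\|T(w)\|\le M\|w\|$ for all $w\ge0$.

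For (3), combine the Krein inequality with the fact that the norm is monotone and absolute with respect to the ordering (property (OBS3) together with $\|h\|=\||h|\|$, both valid in the Banach lattices considered here): this gives $\|T(f)-T(g)\|\le\|T(|f-g|)\|\le M\,\||f-g|\|=M\,\|f-g\|$. Taking $g=0$ (so $T(0)=0$) yields $\|T(f)\|\le M\|f\|$, hence $\|T\|\le M$; the reverse inequality $M\le\|T\|$ is immediate, as $M$ is a supremum over a subset of the closed unit ball. Thus $M=\|T\|$, and the Lipschitz estimate with constant $\|T\|$ is exactly the inequality obtained above. In summary, the delicate work is entirely in step (2); steps (1) and (3) are routine bookkeeping, provided one is careful about which of the axioms (SL), (M), and (OBS3) is invoked at each stage.
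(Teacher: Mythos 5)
Your argument is correct, and there is nothing in the paper to compare it against line by line: the paper gives no proof of Theorem \ref{thmKrein}, deferring entirely to the preprint \cite{Gal-Nic-subm} and noting that the result generalizes Krein's automatic-continuity theorem for positive linear functionals. Your three steps are exactly the expected ones. Step (1) is the standard derivation of the Krein inequality from subadditivity plus monotonicity, and you correctly obtain it as the two-sided estimate $\pm\left(T(f)-T(g)\right)\leq T(|f-g|)$. Step (2), the series (gliding-hump) argument, is precisely the classical proof of Krein's theorem transplanted to the sublinear monotone setting; you correctly use closedness of the cone to get $2^{-n}x_{n}\leq x$, then monotonicity, positive homogeneity and (OBS3) to reach the contradiction. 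Step (3) closes the loop. Two small points of care, neither of which is a gap in the paper's setting: first, writing $|f-g|$ and $|T(f)-T(g)|$ presupposes lattice structure (or the two-sided reading of the inequality), and your step (3) uses the lattice-norm identity $\|h\|=\||h|\|$ together with (OBS3); in a merely regularly ordered space one would instead invoke $\|h\|=\inf\{\|y\|:-y\leq h\leq y\}$ to pass from $-T(|f-g|)\leq T(f)-T(g)\leq T(|f-g|)$ to the norm estimate, and in a general ordered Banach space one only gets a constant multiple of $\|T\|$. Since the operators in this paper map $C(K)$ or $L^{p}$ into $C(X)$, all Banach lattices, your version suffices. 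Second, your step (2) only bounds $T$ on the positive part of the unit ball, but you correctly recover the bound on all of $E$ through the Krein inequality with $g=0$ rather than through the decomposition constant of Lemma \ref{lem2}(a); either route works.
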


See \cite{Gal-Nic-subm} for details. Theorem \ref{thmKrein} is a
generalization of a classical result of M. G. Krein concerning the continuity
of positive linear functionals. See \cite{AA2001}.

\section{An a priori estimate}

The proof of our main results depend on the following a priori estimate,
previously noticed by Popa \cite{Popa2022} in the context of linear and
positive operators defined on the Banach lattice $C\left(  [a,b]\right)  .$

In what follows
\[
u=(1,...,1)
\]
denotes the natural order unit of the vector lattice $\mathbb{R}^{N}.$ Every
compact subset $K$ of the Euclidean space $\mathbb{R}^{N}$ can be moved into
the positive cone $\mathbb{R}_{+}^{N}$ via a translation of the form
$T_{a}:x\rightarrow x+\alpha u,$ associated to a number $\alpha\geq0.$ The
smallest $\alpha$ doing this job will be denoted $\alpha(K,u)$ and we will
refer to it as the \emph{deficit of positivity} of $K$ in the direction $u.$
We have%
\[
\alpha(K,u)=\inf\left\{  \alpha\geq0:K+\alpha u\subset\mathbb{R}_{+}%
^{N}\right\}
\]
and $\alpha(K,u)=0$ if $K\subset\mathbb{R}_{+}^{N}.$

\begin{lemma}
\label{lem3}Suppose that $K$ is a compact subset of the Euclidean space
$\mathbb{R}^{N},$ $X$ is a compact Hausdorff space and $V$ and $A$ are weakly
nonlinear and monotone operators from $C(K)$ into $C(X).$ Then for every
function $f\in C(K),$ and every $\varepsilon>0$ there exists $\delta>0$ such
that
\begin{multline*}
|T(f)A(1)-T(1)A(f)|\\
\leq\varepsilon T(1)A(1)+\delta\left\{  A(1)[T(\sum_{k=1}^{N}\left(
\operatorname*{pr}\nolimits_{k}+\alpha\right)  ^{2})-A(\sum_{k=1}^{N}\left(
\operatorname*{pr}\nolimits_{k}+\alpha\right)  ^{2})]\right. \\
+A(\sum_{k=1}^{N}\left(  \operatorname*{pr}\nolimits_{k}+\alpha\right)
^{2})[T(1)-A(1)]\\
+\left.  2[A(1)A(\sum_{k=1}^{N}\left(  \operatorname*{pr}\nolimits_{k}%
+\alpha\right)  ^{2})-\sum_{k=1}^{N}(A(-\operatorname*{pr}\nolimits_{k}%
-\alpha))^{2}]\right. \\
+\left.  2\sum_{k=1}^{N}A(-\operatorname*{pr}\nolimits_{k}-\alpha
)[A(-\operatorname*{pr}\nolimits_{k}-\alpha)-T(-\operatorname*{pr}%
\nolimits_{k}-\alpha)]\right\}  ,
\end{multline*}
whenever $\alpha\geq\alpha(K,u)$.
\end{lemma}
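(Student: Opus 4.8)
The plan is to reduce the estimate to the Korovkin absolute continuity of $f$ by a two-layer argument in which the two operators --- which I shall call $T$ and $A$, as in the displayed inequality --- are applied one after the other, in both possible orders, so as to produce the two one-sided bounds whose sum is the asserted absolute-value estimate. The first step is a harmless normalization. Since $C(K)$ is a vector space containing $1$ and, by translatability, $T(f+c\cdot 1)A(1)-T(1)A(f+c\cdot 1)=T(f)A(1)-T(1)A(f)$ for every $c\ge 0$, while $f$ does not occur on the right-hand side of the asserted inequality and the Korovkin modulus is translation invariant, I may replace $f$ by $f+\left\Vert f\right\Vert_{\infty}\cdot 1$ and thus assume $f\ge 0$.

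Fix $\varepsilon>0$. Compactness of $K$ makes $f$ uniformly continuous and bounded, hence Korovkin absolutely continuous, so there is $\delta>0$ with $\left\vert f(x)-f(y)\right\vert\le\varepsilon+\delta\left\Vert x-y\right\Vert^{2}$ on $K\times K$. Writing $\left\Vert x-y\right\Vert^{2}=\sum_{k=1}^{N}\bigl((x_{k}+\alpha)-(y_{k}+\alpha)\bigr)^{2}$ with $x_{k}=\operatorname{pr}_{k}(x)$ and expanding the square, the inequality $f(x)\le f(y)+\varepsilon+\delta\left\Vert x-y\right\Vert^{2}$ becomes, for each fixed $x\in K$, an order inequality in $C(K)$ in the variable $y$:
\[
f(x)\cdot 1\ \le\ f+\Bigl(\varepsilon+\delta\sum_{k=1}^{N}(x_{k}+\alpha)^{2}\Bigr)\cdot 1+\delta\,Q_{\alpha}+2\delta\sum_{k=1}^{N}(x_{k}+\alpha)\,P_{k},
\]
where $Q_{\alpha}=\sum_{k=1}^{N}(\operatorname{pr}_{k}+\alpha)^{2}$ and $P_{k}=-\operatorname{pr}_{k}-\alpha$. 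This is where the hypothesis $\alpha\ge\alpha(K,u)$ enters: it forces $\operatorname{pr}_{k}+\alpha\ge 0$ on $K$, so all the coefficients $x_{k}+\alpha$ are $\ge 0$ and $P_{k}\le 0$ on $K$, whence $A(P_{k})\le 0$ and $T(P_{k})\le 0$ by monotonicity.

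Now I would apply $A$ to the last inequality, using monotonicity, translatability (with the nonnegative scalar $\varepsilon+\delta\sum(x_{k}+\alpha)^{2}$), subadditivity and positive homogeneity; since $f\ge 0$ the left side becomes $f(x)A(1)$, and one obtains, for all $x\in K$,
\[
f(x)A(1)\ \le\ A(f)+\Bigl(\varepsilon+\delta\sum_{k=1}^{N}(x_{k}+\alpha)^{2}\Bigr)A(1)+\delta\,A(Q_{\alpha})+2\delta\sum_{k=1}^{N}(x_{k}+\alpha)\,A(P_{k})
\]
on $X$. Freezing $x'\in X$, the right side read as a function of $x\in K$ is $c_{0}\cdot 1+\delta A(1)(x')\,Q_{\alpha}+\sum_{k}\lambda_{k}P_{k}$ with $c_{0}=A(f)(x')+\varepsilon A(1)(x')+\delta A(Q_{\alpha})(x')\ge 0$ and $\lambda_{k}=-2\delta A(P_{k})(x')\ge 0$, so $f\cdot A(1)(x')\le g_{x'}$ in $C(K)$ for this explicit $g_{x'}$. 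Applying $T$ (monotone), expanding $T(g_{x'})$ by translatability, subadditivity and positive homogeneity, and evaluating at $x'$, gives
\[
A(1)T(f)-T(1)A(f)\ \le\ \varepsilon\,A(1)T(1)+\delta\Lambda\quad\text{on }X,\qquad\Lambda:=A(1)T(Q_{\alpha})+A(Q_{\alpha})T(1)-2\sum_{k=1}^{N}A(P_{k})T(P_{k}).
\]
Carrying out the identical computation with the order of application reversed (first $T$, then $A$) yields $T(1)A(f)-A(1)T(f)\le\varepsilon\,T(1)A(1)+\delta\Lambda$, since $\Lambda$ is invariant under interchanging $T$ and $A$ (pointwise multiplication is commutative). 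Adding the two bounds gives $\left\vert T(f)A(1)-T(1)A(f)\right\vert\le\varepsilon\,T(1)A(1)+\delta\Lambda$. Finally, expanding the four bracketed summands in the statement, the terms $-A(1)A(Q_{\alpha})$, $-A(Q_{\alpha})A(1)$, $2A(1)A(Q_{\alpha})$ cancel, and so do $-2\sum_{k}(A(P_{k}))^{2}$ and $+2\sum_{k}(A(P_{k}))^{2}$, leaving exactly $\Lambda$; this identifies $\delta\Lambda$ with the displayed $\delta\{\cdots\}$ and completes the argument.

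I expect the main work to be bookkeeping rather than insight. At each of the several places where translatability or positive homogeneity is invoked, one must check that the scalar involved is nonnegative --- this is precisely where $\alpha\ge\alpha(K,u)$, the normalization $f\ge 0$, and the signs of $A(P_{k})$ and $T(P_{k})$ are needed --- and one must be careful that after applying the first operator and freezing $x'\in X$ one genuinely has an inequality in $C(K)$ (it holds pointwise in $x\in K$), so that the second, nonlinear, operator may be legitimately applied to it. The one substantive point is the observation that the cumbersome right-hand side of the statement is nothing but the symmetric quantity $\Lambda$ in disguise, which is exactly what allows the two orders of application to produce the same bound, and hence the two-sided estimate.
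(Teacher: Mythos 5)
Your argument is correct and follows essentially the same route as the paper: Korovkin absolute continuity, expansion of $\Vert x-y\Vert^{2}$ in the shifted projections, successive application of the two operators using monotonicity, sublinearity and translatability (with the sign checks at exactly the right places), and the final algebraic regrouping into the four bracketed terms. The only cosmetic difference is that the paper obtains the two-sided estimate in one pass via the modulus inequality $|T(f)-T(g)|\leq T(|f-g|)$ of Theorem \ref{thmKrein}, whereas you derive two one-sided bounds by applying $T$ and $A$ in the two possible orders and invoke the symmetry of $\Lambda$ (and note that the two bounds are \emph{combined} into an absolute-value estimate, not literally added).
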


\begin{proof}
Let $f\in C(K)$ and $\varepsilon>0.$ Since $f$ is Korovkin absolutely
continuous, there is $\delta>0$ such that for all $x,y\in K$ we have
\begin{align*}
|f(x)-f(y)|  &  \leq\varepsilon+\delta\left\Vert x-y\right\Vert ^{2}\\
&  =\varepsilon+\delta\left\Vert x+\alpha u-\left(  y+\alpha u\right)
\right\Vert ^{2}.
\end{align*}
Viewing $y$ as a parameter, the last inequality can be rewritten as%
\[
\left\vert f-f(y)\right\vert \leq\varepsilon+\delta\left[  \sum_{k=1}%
^{N}\left(  \operatorname*{pr}\nolimits_{k}+\alpha\right)  ^{2}-2\sum
_{k=1}^{N}\left(  \operatorname*{pr}\nolimits_{k}(y)+\alpha\right)  \left(
\operatorname*{pr}\nolimits_{k}+\alpha\right)  \right.  \left.  +\sum
_{k=1}^{N}\left(  \operatorname*{pr}\nolimits_{k}(y)+\alpha\right)
^{2}\right]  .
\]

The choice of $\alpha$ makes $\operatorname*{pr}\nolimits_{k}+\alpha\geq0$ and
this allows us to value the properties of positive homogeneity and
translatability of the operator $T.$

Suppose for a moment that $f$ is nonnegative. Then, according to Theorem
\ref{thmKrein},
\begin{multline*}
|T(f)-f(y)T(1)|\leq T(|f-f(y)\cdot1|)\\
\leq\varepsilon T(1)+\delta\left[  T(\sum_{k=1}^{N}\left(  \operatorname*{pr}%
\nolimits_{k}+\alpha\right)  ^{2})+2\sum_{k=1}^{N}\left(  \operatorname*{pr}%
\nolimits_{k}(y)+\alpha\right)  T\left(  -\operatorname*{pr}\nolimits_{k}%
-\alpha\right)  \right. \\
\left.  +\sum_{k=1}^{N}\left(  \operatorname*{pr}\nolimits_{k}(y)+\alpha
\right)  ^{2}T(1)\right]  ,
\end{multline*}
which yields, for each $t\in X,$ the following inequality in $C(X):$
\begin{multline*}
|\left(  T(f)\right)  (t)-T(1)(t)\cdot f|\\
\leq\varepsilon\left(  T(1)\right)  (t)+\delta\left[  T(\sum_{k=1}^{N}\left(
\operatorname*{pr}\nolimits_{k}+\alpha\right)  ^{2})(t)+2\sum_{k=1}^{N}\left(
T\left(  -\operatorname*{pr}\nolimits_{k}-\alpha\right)  \right)  (t)\right.
\left(  \operatorname*{pr}\nolimits_{k}+\alpha\right) \\
\left.  +\left(  T(1)\right)  (t)\sum_{k=1}^{N}\left(  \operatorname*{pr}%
\nolimits_{k}+\alpha\right)  ^{2}\right]
\end{multline*}
Applying the weakly nonlinear and monotone operator $A$ to the both sides of
the last inequality and taking into account that $-2T(-\operatorname*{pr}%
\nolimits_{k}-\alpha)\geq0,$ we obtain
\begin{multline*}
|\left(  T(f)\right)  (t)A(1)-\left(  T(1)\right)  (t)A(f)|\leq A\left(
|\left(  T(f)\right)  (t)-\left(  T(1)\right)  (t)\cdot f|\right) \\
\leq\varepsilon\left(  T(1)\right)  (t)A(1)+\delta\left[  T(\sum_{k=1}%
^{N}\left(  \operatorname*{pr}\nolimits_{k}+\alpha\right)  ^{2})(t)A(1)\right.
\\
-2\sum_{k=1}^{N}T\left(  -\operatorname*{pr}\nolimits_{k}-\alpha\right)
(t)A(-\operatorname*{pr}\nolimits_{k}-\alpha)\left.  +\left(  T(1)\right)
(t)A(\sum_{k=1}^{N}\left(  \operatorname*{pr}\nolimits_{k}+\alpha\right)
^{2})\right]  ,
\end{multline*}
whence
\begin{multline*}
|\left(  T(f)\right)  (t)A(1)(t)-\left(  T(1)\right)  (t)A(f)(t)|\\
\leq\varepsilon\left(  T(1)\right)  (t)A(1)(t)+\delta\left\{  \lbrack
(T(\sum_{k=1}^{N}\left(  \operatorname*{pr}\nolimits_{k}+\alpha\right)
^{2})(t)-A(\sum_{k=1}^{N}\left(  \operatorname*{pr}\nolimits_{k}%
+\alpha\right)  ^{2})(t)]A(1)(t)\right. \\
+(A(\sum_{k=1}^{N}\left(  \operatorname*{pr}\nolimits_{k}+\alpha\right)
^{2})(t))\left[  T(1)(t)-A(1)(t)\right] \\
+2[A(1)(t)A(\sum_{k=1}^{N}\left(  \operatorname*{pr}\nolimits_{k}%
+\alpha\right)  ^{2})(t)-\sum_{k=1}^{N}\left(  A(-\operatorname*{pr}%
\nolimits_{k}-\alpha)\right)  ^{2}(t)]\\
\left.  +2\sum_{k=1}^{N}A(-\operatorname*{pr}\nolimits_{k}-\alpha)(t)\left[
A(-\operatorname*{pr}\nolimits_{k}-\alpha)(t)-T\left(  -\operatorname*{pr}%
\nolimits_{k}-\alpha\right)  (t)\right]  \right\}  .
\end{multline*}
As $t\in T$ was arbitrarily chosen, this ends the proof in the case of
nonnegative functions.

The case where $f$ is a signed function can be reduced to this one by
replacing $f$ by $f+\left\Vert f\right\Vert _{\infty}.$ Indeed, using the
property of weak nonlinearity of the operators $T$ and $A$ we have%
\begin{align*}
&  T(f+\left\Vert f\right\Vert _{\infty})A(1)-A(f+\left\Vert f\right\Vert
_{\infty})T(1)\\
&  =\left(  T(f)+\left\Vert f\right\Vert _{\infty}T(1)\right)  A(1)-\left(
A(f)+\left\Vert f\right\Vert _{\infty}A(1)\right)  T(1)\\
&  =T(f)A(1)-A(f)T(1).
\end{align*}

\end{proof}

\begin{corollary}
\label{cor1}Under the hypotheses of Lemma \emph{\ref{lem3}}, the following
inequality holds%
\begin{multline*}
\left\Vert T(f)A(1)-T(1)A(f)\right\Vert \\
\leq\varepsilon\Vert T(1)\Vert\cdot\Vert A(1)\Vert+\delta\left\{  \lbrack\Vert
A(1)\Vert\cdot\Vert T(\sum_{k=1}^{N}\left(  \operatorname*{pr}\nolimits_{k}%
+\alpha\right)  ^{2})-A(\sum_{k=1}^{N}\left(  \operatorname*{pr}%
\nolimits_{k}+\alpha\right)  ^{2})\Vert\right. \\
\left.  +\Vert A(\sum_{k=1}^{N}\left(  \operatorname*{pr}\nolimits_{k}%
+\alpha\right)  ^{2})\Vert\cdot\Vert T(1)-A(1)\Vert\right.  +\left.  2\Vert
A(1)A(\sum_{k=1}^{N}\left(  \operatorname*{pr}\nolimits_{k}+\alpha\right)
^{2})-\sum_{k=1}^{N}(A(-\operatorname*{pr}\nolimits_{k}-\alpha))^{2}%
\Vert\right. \\
+\left.  2\sum_{k=1}^{N}\Vert A(-\operatorname*{pr}\nolimits_{k}-\alpha
)\Vert\cdot\Vert T(-\operatorname*{pr}\nolimits_{k}-\alpha
)-A(-\operatorname*{pr}\nolimits_{k}-\alpha)\Vert\right\}  .
\end{multline*}

\end{corollary}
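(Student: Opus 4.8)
The plan is to derive Corollary \ref{cor1} directly from Lemma \ref{lem3} by passing from the pointwise (order) inequality in $C(X)$ to an inequality between sup-norms. Since Lemma \ref{lem3} already delivers the estimate
\[
|T(f)A(1)-T(1)A(f)|\leq \varepsilon T(1)A(1)+\delta\,R
\]
in $C(X)$, where $R$ is the bracketed sum of four nonnegative terms, the first step is simply to observe that each summand on the right-hand side is a nonnegative element of $C(X)$: the factor $A(1)$ is $\geq 0$ by (\ref{pos-op}), the differences $T(\cdot)-A(\cdot)$ inside are handled via Theorem \ref{thmKrein} (or rather their moduli dominate them), and the quantities $A(-\operatorname*{pr}\nolimits_{k}-\alpha)$ are $\geq 0$ because $-\operatorname*{pr}\nolimits_{k}-\alpha\geq 0$ on $K$ by the choice $\alpha\geq\alpha(K,u)$ and $A$ is monotone with $A(0)=0$.

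Next I would take sup-norms on both sides. For the left-hand side, taking $\Vert\cdot\Vert_\infty$ of the order inequality $|g|\leq h$ (valid pointwise in $C(X)$) gives $\Vert g\Vert\leq\Vert h\Vert$, and then I would use the triangle inequality together with submultiplicativity of the sup-norm, $\Vert\varphi\psi\Vert\leq\Vert\varphi\Vert\cdot\Vert\psi\Vert$ in $C(X)$, to split each product term. Thus $\Vert\varepsilon T(1)A(1)\Vert\leq\varepsilon\Vert T(1)\Vert\cdot\Vert A(1)\Vert$; $\Vert A(1)[T(\sum(\operatorname*{pr}\nolimits_k+\alpha)^2)-A(\sum(\operatorname*{pr}\nolimits_k+\alpha)^2)]\Vert\leq\Vert A(1)\Vert\cdot\Vert T(\sum(\operatorname*{pr}\nolimits_k+\alpha)^2)-A(\sum(\operatorname*{pr}\nolimits_k+\alpha)^2)\Vert$; and similarly for the remaining terms. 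The term $2[A(1)A(\sum(\operatorname*{pr}\nolimits_k+\alpha)^2)-\sum(A(-\operatorname*{pr}\nolimits_k-\alpha))^2]$ is kept as a single block since it need not factor, yielding the norm $2\Vert A(1)A(\sum(\operatorname*{pr}\nolimits_k+\alpha)^2)-\sum(A(-\operatorname*{pr}\nolimits_k-\alpha))^2\Vert$; while each summand $2A(-\operatorname*{pr}\nolimits_k-\alpha)[A(-\operatorname*{pr}\nolimits_k-\alpha)-T(-\operatorname*{pr}\nolimits_k-\alpha)]$ factors to give $2\Vert A(-\operatorname*{pr}\nolimits_k-\alpha)\Vert\cdot\Vert T(-\operatorname*{pr}\nolimits_k-\alpha)-A(-\operatorname*{pr}\nolimits_k-\alpha)\Vert$, where I replace $\Vert A-T\Vert$ by $\Vert T-A\Vert$ freely.

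Assembling these bounds term by term reproduces exactly the right-hand side claimed in Corollary \ref{cor1}, and $\delta$ factors out of the whole bracket. There is essentially no obstacle here; the only point requiring a word of care is the sign bookkeeping — making sure that the quantities one wishes to dominate by their norms are genuinely nonnegative in $C(X)$ so that the passage $|g|\leq h\Rightarrow \Vert g\Vert\leq\Vert h\Vert$ applies cleanly, and that no cancellation in Lemma \ref{lem3} has been lost. Since all the relevant nonnegativities were already verified in the proof of Lemma \ref{lem3} (via $\operatorname*{pr}\nolimits_k+\alpha\geq0$, $-\operatorname*{pr}\nolimits_k-\alpha\geq0$, monotonicity of $T$ and $A$, and (\ref{pos-op})), the corollary follows immediately by monotonicity of the norm and submultiplicativity. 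Hence the proof is a one-line deduction: take norms in Lemma \ref{lem3}, apply the triangle inequality and $\Vert\varphi\psi\Vert\leq\Vert\varphi\Vert\,\Vert\psi\Vert$.
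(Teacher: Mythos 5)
Your argument is correct and coincides with what the paper intends: the corollary is stated as an immediate consequence of Lemma \ref{lem3}, obtained by passing from the pointwise inequality $|g|\leq h$ to $\Vert g\Vert\leq\Vert h\Vert$ in the sup-norm and then applying the triangle inequality and $\Vert\varphi\psi\Vert\leq\Vert\varphi\Vert\,\Vert\psi\Vert$ termwise (your extra care about the sign of each summand is harmless but not needed, since the triangle inequality applies regardless). No gap.
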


\section{The main results}

We are now in a position to settle the case of uniform convergence.

\begin{theorem}
\label{thm3}Suppose that $K$ is a compact subset of the Euclidean space
$\mathbb{R}^{N}$, $X$ is a compact Hausdorff space and $T_{n}$ $(n\in
\mathbb{N})$ and $A$ are weakly nonlinear and monotone operators from $E=C(K)$
into $C(X)$ such that
\begin{equation}
A(1)\text{ is a strictly positive element} \label{hyp>0}%
\end{equation}
and
\begin{equation}
A(1)A(\sum_{k=1}^{N}\left(  \operatorname*{pr}\nolimits_{k}+\alpha\right)
^{2})=\sum_{k=1}^{N}(A(-\operatorname*{pr}\nolimits_{k}-\alpha))^{2}
\label{hypA}%
\end{equation}
for a suitable $\alpha\geq\alpha(K,u)$.

Then%
\[
\lim_{n\rightarrow\infty}\left\Vert T_{n}(f)-A(f)\right\Vert =0\text{\quad for
all }f\in C(K)
\]
if and only if this property occurs for each of the functions
\begin{equation}
1,-(\operatorname*{pr}\nolimits_{1}+\alpha),...,-(\operatorname*{pr}%
\nolimits_{N}+\alpha)\text{ and }\sum\nolimits_{k=1}^{N}\left(
\operatorname*{pr}\nolimits_{k}+\alpha\right)  ^{2}. \label{testset}%
\end{equation}
Moreover, if $K\subset\mathbb{R}_{+}^{N},$ then Theorem \ref{thm3} works for
$\alpha=0.$
\end{theorem}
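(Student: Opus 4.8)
The converse implication needs no work: if $\left\Vert T_n(f)-A(f)\right\Vert\to 0$ for every $f\in C(K)$, then it holds in particular for each of the functions listed in \eqref{testset}, all of which lie in $C(K)$. For the substantial (sufficiency) direction the plan is to feed the two standing hypotheses \eqref{hyp>0} and \eqref{hypA} into the a priori estimate of Corollary \ref{cor1}. Fix $f\in C(K)$ and $\varepsilon>0$, and abbreviate $\Sigma_\alpha=\sum_{k=1}^{N}(\operatorname*{pr}\nolimits_{k}+\alpha)^2$. Applying Corollary \ref{cor1} with $T=T_n$ produces a single $\delta>0$ that depends only on $f$ and $\varepsilon$ — it comes solely from the Korovkin absolute continuity of $f$ (Lemma \ref{lem3}), hence is independent of $n$ — such that
\[
\left\Vert T_n(f)A(1)-T_n(1)A(f)\right\Vert\le\varepsilon\left\Vert T_n(1)\right\Vert\left\Vert A(1)\right\Vert+\delta\,R_n,
\]
where $R_n$ is the sum of the four error terms displayed on the right-hand side of that corollary.

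I would then let $n\to\infty$ in this inequality. The sequence $\left(T_n(1)\right)_n$ converges in norm, hence is bounded, so the first term is $\le\varepsilon C$ with $C=\left\Vert A(1)\right\Vert\sup_n\left\Vert T_n(1)\right\Vert<\infty$. Inside $R_n$: the first error term equals $\left\Vert A(1)\right\Vert\cdot\left\Vert T_n(\Sigma_\alpha)-A(\Sigma_\alpha)\right\Vert\to 0$ and the second equals $\left\Vert A(\Sigma_\alpha)\right\Vert\cdot\left\Vert T_n(1)-A(1)\right\Vert\to 0$ by the hypothesis that convergence holds on the test set \eqref{testset}; the third error term is $2\bigl\Vert A(1)A(\Sigma_\alpha)-\sum_{k=1}^{N}(A(-\operatorname*{pr}\nolimits_{k}-\alpha))^{2}\bigr\Vert$, which vanishes identically by \eqref{hypA}; and the last error term is a finite sum of products $\left\Vert A(-\operatorname*{pr}\nolimits_{k}-\alpha)\right\Vert\cdot\left\Vert T_n(-\operatorname*{pr}\nolimits_{k}-\alpha)-A(-\operatorname*{pr}\nolimits_{k}-\alpha)\right\Vert\to 0$, again by the assumption on \eqref{testset}. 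Hence $\limsup_n\left\Vert T_n(f)A(1)-T_n(1)A(f)\right\Vert\le\varepsilon C$, and since $\varepsilon>0$ was arbitrary, $\left\Vert T_n(f)A(1)-T_n(1)A(f)\right\Vert\to 0$.

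To finish, I would upgrade this to convergence of $T_n(f)$ itself. Since $T_n(1)\to A(1)$ in $C(X)$ and $A(f)$ is a fixed element of $C(X)$, pointwise multiplication gives $T_n(1)A(f)\to A(1)A(f)$; combining with the previous step, $\bigl(T_n(f)-A(f)\bigr)A(1)\to 0$ in $C(X)$. Now I invoke \eqref{hyp>0}: in $C(X)$ a positive function is strictly positive exactly when it is strictly positive at every point of $X$ (the point evaluations exhaust the extreme rays of $C(X)_+^{\ast}$), and since $X$ is compact and $A(1)$ continuous this forces $A(1)\ge m\cdot 1$ for some constant $m>0$. Therefore $\left\Vert T_n(f)-A(f)\right\Vert\le m^{-1}\left\Vert\bigl(T_n(f)-A(f)\bigr)A(1)\right\Vert\to 0$, which is the asserted conclusion. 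For the final sentence of the theorem, $K\subset\mathbb{R}_+^{N}$ gives $\alpha(K,u)=0$, so $\alpha=0$ is an admissible choice everywhere above (no translation is needed), and \eqref{testset} then reads $1,-\operatorname*{pr}\nolimits_{1},\dots,-\operatorname*{pr}\nolimits_{N},\sum_{k=1}^{N}\operatorname*{pr}\nolimits_{k}^{2}$.

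The main obstacle I anticipate is not conceptual but organizational: checking in the middle step that $\delta$ may genuinely be chosen uniformly in $n$ (this is precisely what Lemma \ref{lem3} and Corollary \ref{cor1} guarantee, since $\delta$ is attached to $f$ via its Korovkin absolute continuity and not to the operators), and that every remaining numerical factor — the norms $\left\Vert A(1)\right\Vert$, $\left\Vert A(\Sigma_\alpha)\right\Vert$, $\left\Vert A(-\operatorname*{pr}\nolimits_{k}-\alpha)\right\Vert$ and $\sup_n\left\Vert T_n(1)\right\Vert$ — is finite. The only other place needing a moment's care is the last step, where the abstract strict positivity in \eqref{hyp>0} must be turned into the uniform lower bound $A(1)\ge m\cdot 1$ using compactness of $X$. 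Both points are routine.
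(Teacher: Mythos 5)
Your proposal is correct and follows essentially the same route as the paper: apply Corollary \ref{cor1} with $T=T_{n}$, note that $\delta$ depends only on $f$ and $\varepsilon$, kill the third error term via \eqref{hypA}, let the remaining terms vanish using convergence on the test set, and then divide out $A(1)$ using the uniform lower bound supplied by \eqref{hyp>0} and compactness of $X$. The only cosmetic difference is that the paper first reduces to $\alpha=0$ by the translation $x\mapsto x+\alpha u$, whereas you carry the general $\alpha$ through the whole argument, which is equally valid.
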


\begin{proof}
The necessity part is clear. For sufficiency, notice first that we may
restrict ourselves to the case where $K\subset\mathbb{R}_{+}^{N}$ by
performing, if necessary, a change of variable of the form $y=x+\alpha u.$
This allows us to continue the proof with $\alpha=0.$

According to Corollary \ref{cor1}, for all $f\in C\left(  K\right)  $ and
$\varepsilon>0$ there exists $\delta>0$ such that for all $n\in\mathbb{N}$ we
have%
\begin{multline*}
\left\Vert T_{n}(f)A(1)-T_{n}(1)A(f)\right\Vert \\
\leq\varepsilon\Vert T_{n}(1)\Vert\cdot\Vert A(1)\Vert+\delta\left\{
\lbrack\Vert A(1)\Vert\cdot\Vert T_{n}(\sum_{k=1}^{N}\operatorname*{pr}%
\nolimits_{k}^{2})-A(\sum_{k=1}^{N}\operatorname*{pr}\nolimits_{k}^{2}%
)\Vert\right. \\
\left.  +\Vert A(\sum_{k=1}^{N}\operatorname*{pr}\nolimits_{k}^{2})\Vert
\cdot\Vert T_{n}(1)-A(1)\Vert\right.  +2||A(1)A(\sum_{k=1}^{N}%
\operatorname*{pr}\nolimits_{k}^{2})-\sum_{k=1}^{N}[A(-\operatorname*{pr}%
\nolimits_{k})]^{2}||\\
+\left.  2\sum_{k=1}^{N}\Vert A(-\operatorname*{pr}\nolimits_{k})\Vert
\cdot\Vert T_{n}(-\operatorname*{pr}\nolimits_{k})-A(-\operatorname*{pr}%
\nolimits_{k})\Vert\right\}  ,
\end{multline*}
whence, by using hypothesis (\ref{hypA}), we infer that%
\begin{multline*}
\left\Vert T_{n}(f)A(1)-T_{n}(1)A(f)\right\Vert \\
\leq\varepsilon\Vert T_{n}(1)\Vert\cdot\Vert A(1)\Vert+\eta_{\varepsilon
}\left\{  [\Vert A(1)\Vert\cdot\Vert T_{n}(\sum_{k=1}^{N}\operatorname*{pr}%
\nolimits_{k}^{2})-A(\sum_{k=1}^{N}\operatorname*{pr}\nolimits_{k}^{2}%
)\Vert\right. \\
\left.  +\Vert A(\sum_{k=1}^{N}\operatorname*{pr}\nolimits_{k}^{2})\Vert
\cdot\Vert T_{n}(1)-A(1)\Vert+2\sum_{k=1}^{N}\Vert A(-\operatorname*{pr}%
\nolimits_{k})\Vert\cdot\Vert T_{n}(-\operatorname*{pr}\nolimits_{k}%
)-A(-\operatorname*{pr}\nolimits_{k})\Vert\right\}  .
\end{multline*}

Since $\lim_{n\rightarrow\infty}T_{n}(f)=A(f)$ for $f\in\left\{
1,-\operatorname*{pr}\nolimits_{1},...,-\operatorname*{pr}\nolimits_{N}%
,\sum_{k=1}^{N}\operatorname*{pr}\nolimits_{k}^{2}\right\}  ,$ the last
inequality implies that%
\[
\lim_{n\rightarrow\infty}\Vert T_{n}(f)A(1)-T_{n}(1)A(f)\Vert=0.
\]
Combining this fact with the inequality
\[
\Vert T_{n}(f)A(1)-A(1)A(f)\Vert\leq\Vert T_{n}(f)A(1)-T_{n}(1)A(f)\Vert
\]%
\[
+||T_{n}(1)A(f)-A(1)A(f)\Vert
\]%
\[
\leq\Vert T_{n}(f)A(1)-T_{n}(1)A(f)\Vert+\Vert A(f)\Vert\cdot\Vert
T_{n}(1)-A(1)\Vert,
\]
we infer that%
\[
\lim_{n\rightarrow\infty}\Vert\lbrack T_{n}(f)-A(f)]A(1)\Vert=0.
\]
By our hypotheses the function $A(1)$ is strictly positive, which yields
$\inf_{t\in X}A(1)(t)>0.$ Since%
\[
\lbrack\inf_{t\in X}A(1)(t)]\cdot\Vert T_{n}(f)-A(f)\Vert\leq\Vert\lbrack
T_{n}(f)-A(f)]A(1)\Vert,
\]
it follows that%
\[
\lim_{n\rightarrow\infty}\Vert T_{n}(f)-A(f)\Vert=0
\]
and the proof is done.
\end{proof}

\begin{remark}
\label{rem2}In the particular case where $E=F=C(K)$ and $A$ is an algebra
homomorphism preserving the unit $($in particular, if $A=I),$ the conditions
\emph{(\ref{hypA})} and \emph{(\ref{hyp>0})} are automatically fulfilled and
Theorem \ref{thm3} is covered by our previous results in
\emph{\cite{Gal-Nic-Med}}.
\end{remark}

The condition $\lim_{n\rightarrow\infty}\left\Vert T_{n}(1)-A(1)\right\Vert
=0$ in Theorem \ref{thm3} implies the equicontinuity of the operators $T_{n}$
since $\left\Vert T_{n}\right\Vert =T_{n}(1)$ for all $n.$

The result of Theorem \ref{thm3} can be put in a more generality as follows:

\begin{theorem}
\label{thm4}Suppose that $K$ is a compact subset of the Euclidean space
$\mathbb{R}^{N}$, $E$ is an ordered Banach space that includes $C(K)$ in such
a way that the canonical inclusion $i_{K}:C(K)\rightarrow E$ is a positive
linear operator with dense image and $X$ is a compact Hausdorff space. If
$T_{n}$ $(n\in\mathbb{N})$ and $A$ are weakly nonlinear and monotone operators
from $E$ into $C(X)$ such that%
\begin{gather}
A(1)\text{ is a strictly positive element,}\label{strictpos}\\
A(1)A(\sum_{k=1}^{N}\left(  \operatorname*{pr}\nolimits_{k}+\alpha\right)
^{2})=\sum_{k=1}^{N}(A(-\operatorname*{pr}\nolimits_{k}-\alpha))^{2}\text{ for
some }\alpha\geq d(K,u), \label{hypAA}%
\end{gather}
and%
\[
\sup_{n}\left\Vert T_{n}\right\Vert <\infty,
\]
\noindent then $\lim_{n\rightarrow\infty}\left\Vert T_{n}(f)-A(f)\right\Vert
=0$ for all $f\in E,$ if and only if this property of convergence occurs for
the following set of functions:%
\begin{equation}
1,-(\operatorname*{pr}\nolimits_{1}+\alpha),...,-(\operatorname*{pr}%
\nolimits_{N}+\alpha)~\text{and}~\sum\nolimits_{k=1}^{N}\left(
\operatorname*{pr}\nolimits_{k}+\alpha\right)  ^{2}. \label{hypKor}%
\end{equation}

\end{theorem}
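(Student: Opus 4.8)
The plan is to reduce Theorem \ref{thm4} to the already established Theorem \ref{thm3} and then to propagate the conclusion from $C(K)$ to the whole of $E$ by a density argument. The necessity is immediate, since the functions in (\ref{hypKor}) belong to $C(K)$ and hence, through the inclusion $i_{K}$, to $E$, so convergence on all of $E$ a fortiori forces convergence on this finite set. For the sufficiency I would first pass to the restrictions $T_{n}\circ i_{K}$ and $A\circ i_{K}$, regarded as operators from $C(K)$ into $C(X)$. Because $i_{K}$ is linear and carries the unit of $C(K)$ to the unit of $E$, sublinearity and translatability transfer at once; and because $i_{K}$ is positive, $f\leq g$ in $C(K)$ gives $i_{K}f\leq i_{K}g$ in $E$ and hence $T_{n}(i_{K}f)\leq T_{n}(i_{K}g)$, so monotonicity transfers too. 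Thus $T_{n}\circ i_{K}$ and $A\circ i_{K}$ are weakly nonlinear and monotone. The structural hypotheses (\ref{strictpos}) and (\ref{hypAA}) only involve $1$, the projections $\operatorname*{pr}\nolimits_{k}$ and $\sum_{k}(\operatorname*{pr}\nolimits_{k}+\alpha)^{2}$, all of which lie in $C(K)$, so they hold verbatim for the restricted operators. Hence Theorem \ref{thm3}, applied with the same $\alpha$, yields $\lim_{n\to\infty}\Vert T_{n}(i_{K}f)-A(i_{K}f)\Vert=0$ for every $f\in C(K)$.

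Next I would upgrade this to all of $E$. Put $M=\sup_{n}\Vert T_{n}\Vert$, which is finite by hypothesis, and recall that $A$, being sublinear and monotone, is Lipschitz with finite constant $\Vert A\Vert$ by Theorem \ref{thmKrein}. Given $g\in E$ and $\varepsilon>0$, the density of $i_{K}(C(K))$ in $E$ lets me choose $f\in C(K)$ with $\Vert g-i_{K}f\Vert<\varepsilon$. Applying Theorem \ref{thmKrein} to $T_{n}$ and to $A$ and inserting $i_{K}f$,
\begin{align*}
\Vert T_{n}(g)-A(g)\Vert
&\leq\Vert T_{n}(g)-T_{n}(i_{K}f)\Vert+\Vert T_{n}(i_{K}f)-A(i_{K}f)\Vert+\Vert A(i_{K}f)-A(g)\Vert\\
&\leq(M+\Vert A\Vert)\,\Vert g-i_{K}f\Vert+\Vert T_{n}(i_{K}f)-A(i_{K}f)\Vert.
\end{align*}
Letting $n\to\infty$ and using the previous step, $\limsup_{n}\Vert T_{n}(g)-A(g)\Vert\leq(M+\Vert A\Vert)\varepsilon$; since $\varepsilon>0$ was arbitrary, $\lim_{n}\Vert T_{n}(g)-A(g)\Vert=0$, which is the desired conclusion.

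I expect the only genuinely new point, compared with Theorem \ref{thm3}, to be the role of the boundedness assumption $\sup_{n}\Vert T_{n}\Vert<\infty$. In Theorem \ref{thm3} it came for free, because there $\Vert T_{n}\Vert=\Vert T_{n}(1)\Vert$ and the sequence $T_{n}(1)$ converges; but on a general ordered Banach space $E$ the constant function $1$ need not be an order unit (for instance when $E=L^{p}(\mu)$ with $p<\infty$), so $\Vert T_{n}\Vert$ is no longer controlled by $\Vert T_{n}(1)\Vert$ and the uniform bound must be imposed. Granting that hypothesis, the argument above is in essence an $\varepsilon/3$ estimate, and the verifications in the first step that weak nonlinearity, monotonicity and the identities (\ref{strictpos})--(\ref{hypAA}) descend to the restricted operators are routine.
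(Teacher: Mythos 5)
Your proof is correct and follows essentially the same route as the paper: reduce to the case $\alpha=0$, obtain convergence on $C(K)$ for the restricted operators $T_{n}\circ i_{K}$ and $A\circ i_{K}$, and then extend to all of $E$ by density together with the Lipschitz bounds $\sup_{n}\Vert T_{n}\Vert$ and $\Vert A\Vert$ from Theorem \ref{thmKrein}. The only (presentational) difference is that you invoke Theorem \ref{thm3} as a black box for the $C(K)$ step, whereas the paper re-runs the estimate of Corollary \ref{cor1} directly on $T_{n}\circ i_{K}$ and $A\circ i_{K}$; the substance is identical.
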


In particular, Theorem \ref{thm4} holds for the Banach lattices $E=L^{p}%
(\mu),$ where $\mu$ is a positive Borel measure on $K.$ The fact that
$L^{p}(\mu)$ includes $C(K)$ as a dense subspace can be covered from many
sources, e.g., \cite{Bog}, Corollary $4.2.2$, p. $252$.

\begin{proof}
As in the case of Theorem \ref{thm3}, for the sufficiency part we may assume
that $\alpha=0$ and $K\subset\mathbb{R}_{+}^{N}.$

Let $f\in E$ and $\varepsilon>0.$ Since the vector lattice $C(K)$ is dense
into $E$ , there exists $g\in C(K)$ such that
\begin{equation}
\left\Vert f-g\right\Vert <\varepsilon. \label{dens}%
\end{equation}
Necessarily, $g$ is Korovkin absolutely continuous, so there exists a number
$\delta>0$ such that
\[
|g(y)-g(x)|\leq\varepsilon+\delta\cdot\left\Vert y-x\right\Vert ^{2}%
\]
for all $x\ $and $y$ in $\mathbb{R}^{N}$. $\ $

According to Corollary \ref{cor1} applied to $T_{n}\circ i_{K}$ and $A\circ
i_{K}$ for $\alpha=0,$ we have%
\begin{multline*}
\left\Vert T_{n}(g)A(1)-T_{n}(1)A(g)\right\Vert \\
\leq\varepsilon\left\Vert T_{n}(1)A(1)\right\Vert +\delta\left\{  \left\Vert
A(1)[T_{n}(\sum_{k=1}^{N}\operatorname*{pr}\nolimits_{k}^{2})-A(\sum_{k=1}%
^{N}\operatorname*{pr}\nolimits_{k}^{2})\right\Vert \right. \\
+\left\Vert A(\sum_{k=1}^{N}\operatorname*{pr}\nolimits_{k}^{2})[T_{n}%
(1)-A(1)]\right\Vert +2\left\Vert A(1)A\left(  \sum_{k=1}^{N}%
\operatorname*{pr}\nolimits_{k}^{2}\right)  -\sum_{k=1}^{N}%
(A(-\operatorname*{pr}\nolimits_{k}))^{2}\right\Vert \\
+\left.  2\sum_{k=1}^{N}\left\Vert A(-\operatorname*{pr}\nolimits_{k}%
)[A(-\operatorname*{pr}\nolimits_{k})-T_{n}(-\operatorname*{pr}\nolimits_{k}%
)]\right\Vert \right\}  ,
\end{multline*}
whence, by denoting $M=\sup\left\Vert T_{n}(1\right\Vert $ and taking into
account the hypothesis (\ref{hypAA}), we obtain the inequality
\begin{multline*}
\left\Vert T_{n}(g)A(1)-T_{n}(1)A(g)\right\Vert \\
\leq\varepsilon M\left\Vert A(1)\right\Vert +\delta\left\{  \left\Vert
A(1)[T_{n}(\sum_{k=1}^{N}\operatorname*{pr}\nolimits_{k}^{2})-A(\sum_{k=1}%
^{N}\operatorname*{pr}\nolimits_{k}^{2})\right\Vert \right. \\
+\left\Vert A(\sum_{k=1}^{N}\operatorname*{pr}\nolimits_{k}^{2})[T_{n}%
(1)-A(1)]\right\Vert \\
+\left.  2\sum_{k=1}^{N}\left\Vert A(-\operatorname*{pr}\nolimits_{k}%
)[A(-\operatorname*{pr}\nolimits_{k})-T_{n}(-\operatorname*{pr}\nolimits_{k}%
)]\right\Vert \right\}  .
\end{multline*}

Thus, under the presence of condition (\ref{hypKor}), we have
\[
\lim_{n\rightarrow\infty}\Vert T_{n}(g)A(1)-T_{n}(1)A(g)\Vert=0.
\]
Since
\begin{align*}
\Vert\lbrack T_{n}(g)-A(g)]A(1)\Vert &  \leq\Vert T_{n}(g)A(1)-T_{n}%
(1)A(g)+T_{n}(1)A(g)-A(1)A(g)\Vert\\
&  \leq\Vert T_{n}(g)A(1)-T_{n}(1)A(g)\Vert+\Vert A(g)\Vert\Vert
T_{n}(1)-A(1)\Vert,
\end{align*}
this yields
\[
\lim_{n\rightarrow\infty}\Vert\lbrack T_{n}(g)-A(g)]A(1)\Vert=0.
\]
Proceeding as in the proof of Theorem \ref{thm3}, we infer that
\[
\lim_{n\rightarrow\infty}\left\Vert T_{n}(g)-A(g)\right\Vert =0
\]
for all $g\in C\left(  K\right)  $. This property can be transferred to $f$
due to the inequalities
\[
\left\Vert T_{n}(f)-A(f)\right\Vert \leq\Vert T_{n}(f)-T_{n}(g)\Vert+\Vert
T_{n}(g)-A(g)\Vert+\Vert A(g)-A(f)\Vert
\]%
\begin{align*}
&  \leq\sup_{n}\left\Vert T_{n}\right\Vert \cdot\Vert f-g\Vert+\Vert
T_{n}(g)-A(g)\Vert+\Vert A\Vert\cdot\Vert f-g\Vert\\
&  \leq2(\sup_{n}\left\Vert T_{n}\right\Vert +\left\Vert A\right\Vert
)\varepsilon+\Vert T_{n}(g)-A(g)\Vert.
\end{align*}
Therefore
\[
\lim_{n\rightarrow\infty}\Vert T_{n}(f)-A(f)\Vert=0,
\]
for all $f\in E$ and the proof is done.
\end{proof}

\begin{remark}
\label{rem3}When the property of translatability is replaced in Theorem
\ref{thm3} and Theorem \ref{thm4} by the property of strong translatability,
the set of test functions can be simplified as follows:
\[
1,-\operatorname*{pr}\nolimits_{1},...,-\operatorname*{pr}\nolimits_{N}\text{
and }\sum\nolimits_{k=1}^{N}\left(  \operatorname*{pr}\nolimits_{k}%
^{2}+2\alpha\operatorname*{pr}\nolimits_{k}\right)  .
\]

\end{remark}

As was noticed by Korovkin \cite{Ko1953}, \cite{Ko1960}, his theorem mentioned
in the Introduction also works when the unit interval is replaced by the unit
circle
\[
S^{1}=\left\{  (\cos\theta,\sin\theta):\theta\in\lbrack0,2\pi)\right\}
\subset\mathbb{R}^{2}.
\]
Noticing that in this case the cosine function represents the restriction of
$\operatorname*{pr}_{1}$ to $S^{1}$ and the sine function represents the
restriction of $\operatorname*{pr}_{2}$ to $S^{1},$ one can easily deduce the
following result from Theorem \ref{thm3}:

\begin{theorem}
\label{thmtrig}Suppose that $T_{n}$ $(n\in\mathbb{N})$ and $A$ are weakly
nonlinear and monotone operators from $E=C_{2\pi}(\mathbb{R})$ into itself
such that $A(1)=1$ and
\begin{equation}
A(3+2\cos+2\sin)=(A(-1-\cos))^{2}+(A(-1-\sin))^{2} \label{trig+}%
\end{equation}

Then%
\[
T_{n}(f)\rightarrow A(f)\text{\quad uniformly for all }f\in C_{2\pi
}(\mathbb{R})
\]
if and only if this property occurs for each of the following test functions,%
\[
1,~-1-\cos,~-1-\sin\text{ and }3+2\cos+2\sin.
\]

\end{theorem}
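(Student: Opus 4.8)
The plan is to obtain Theorem \ref{thmtrig} as the special case of Theorem \ref{thm3} attached to the compact set $K = X = S^{1} \subset \mathbb{R}^{2}$. First I would fix the canonical identification of $C_{2\pi}(\mathbb{R})$ with $C(S^{1})$ induced by the homeomorphism $\theta \mapsto (\cos\theta,\sin\theta)$: a $2\pi$-periodic function $f$ on $\mathbb{R}$ is matched with the (unique, by periodicity) function $g$ on $S^{1}$ such that $g(\cos\theta,\sin\theta) = f(\theta)$. This correspondence is an isometric isomorphism of ordered Banach spaces which sends the constant function $1$ to the unit of $C(S^{1})$; hence it preserves sublinearity, monotonicity and translatability relative to the multiples of the unit, so $T_{n}$ $(n\in\mathbb{N})$ and $A$ become weakly nonlinear and monotone operators from $C(S^{1})$ into $C(S^{1})$. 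Under this identification the restrictions to $S^{1}$ of the canonical projections are $\operatorname*{pr}\nolimits_{1}|_{S^{1}} = \cos$ and $\operatorname*{pr}\nolimits_{2}|_{S^{1}} = \sin$.

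Next I would settle the translation bookkeeping. Since every point of $S^{1}$ has both coordinates in $[-1,1]$ while $(-1,0),(0,-1)\in S^{1}$, the deficit of positivity is $\alpha(S^{1},u) = 1$, so we may apply Theorem \ref{thm3} with $\alpha = 1$. With this value the test family (\ref{testset}) specializes, on $S^{1}$, to
\[
1,\qquad -(\operatorname*{pr}\nolimits_{1}+1) = -1-\cos,\qquad -(\operatorname*{pr}\nolimits_{2}+1) = -1-\sin,
\]
together with
\[
\sum_{k=1}^{2}\left(\operatorname*{pr}\nolimits_{k}+1\right)^{2}\Big|_{S^{1}} = (1+\cos)^{2}+(1+\sin)^{2} = 3+2\cos+2\sin,
\]
using $\cos^{2}+\sin^{2}=1$. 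This is exactly the set of test functions named in the statement.

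It remains to match the standing hypotheses of Theorem \ref{thm3}. The element $A(1) = 1$ is the order unit of $C(S^{1})$, hence strictly positive, so (\ref{hyp>0}) holds. Substituting $\alpha = 1$, $A(1) = 1$ and $\operatorname*{pr}\nolimits_{1}|_{S^{1}} = \cos$, $\operatorname*{pr}\nolimits_{2}|_{S^{1}} = \sin$ into (\ref{hypA}) turns it into
\[
A(3+2\cos+2\sin) = (A(-1-\cos))^{2}+(A(-1-\sin))^{2},
\]
which is precisely the hypothesis (\ref{trig+}). Thus all requirements of Theorem \ref{thm3} are met with $K = X = S^{1}$ and $\alpha = 1$, and the desired equivalence follows by transporting its conclusion back through the identification $C(S^{1}) \cong C_{2\pi}(\mathbb{R})$. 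No genuinely hard step is involved here; the only point demanding care is that, since $S^{1}$ is not contained in $\mathbb{R}_{+}^{2}$, one must work with the shifted projections $\operatorname*{pr}\nolimits_{k}+1$ and then recognize that (\ref{trig+}) is exactly the $\alpha = 1$ instance of condition (\ref{hypA}).
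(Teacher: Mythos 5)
Your proposal is correct and is exactly the derivation the paper intends: the paper merely says the result "can easily be deduced from Theorem \ref{thm3}" via the identifications $\cos = \operatorname*{pr}_{1}|_{S^{1}}$, $\sin = \operatorname*{pr}_{2}|_{S^{1}}$, and you have carried out that routine verification correctly, including the key points that $\alpha(S^{1},u)=1$, that $(1+\cos)^{2}+(1+\sin)^{2}=3+2\cos+2\sin$, and that (\ref{hypA}) with $A(1)=1$ specializes to (\ref{trig+}).
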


Here $C_{2\pi}(\mathbb{R})$ denotes the Banach lattice of all continuous
functions $f:\mathbb{R\rightarrow R},$ periodic, of period $2\pi,$ endowed
with the sup-norm.

\begin{remark}
Notice that the hypothesis \emph{(\ref{trig+})} is automatically fulfilled
when $A$ is an algebra homomorphism preserving the unit $($in particular, if
$A=I).$ When the property of translatability is strengthen to strong
translatability, then the set of test functions in Theorem \ref{thmtrig} can
be replaced by%
\[
1,~-\cos,~-\sin\text{ and }2\cos+2\sin.
\]

\end{remark}

The analogues of Theorem \ref{thmtrig} for some other cases of interest such
as the $2$-dimensional torus $S^{1}\times S^{1}$ and the $2$-dimensional
sphere $S^{2}$ are left to the reader.

Theorem \ref{thmtrig} (and the remarks following it) also works in the context
of Ces\`{a}ro convergence, that is, of the convergence of the form%
\[
\frac{1}{n}\sum\nolimits_{k=1}^{n}T_{k}(f)\rightarrow A.
\]

Concrete examples illustrating the above results are indicated in the next section.

Last but not the least, one can extend our results (following the model of
Theorem 2 in \cite{Gal-Nic-RACSAM}) using instead of the classical families of
test functions on $\mathbb{R}^{N}$ the separating functions, which allow us to
replace the a priori estimates of the form (\ref{eq1}) by inequalities that
work in the general context of metric spaces. For details concerning these
functions see \cite{N2009}.

\section{Examples}

In what follows we will need the following family of polynomials
\[
p_{n,k}(x)={\binom{n}{k}}x^{k}(1-x)^{n-k},\text{\quad}0\leq k\leq n,
\]
related to Bernstein's proof of the Weierstrass approximation theorem. As is
well known they verify a number of combinatorial identities such as%
\begin{align}
\sum_{k=0}^{n}\binom{n}{k}x^{k}(1-x)^{n-k}  &  =1\label{comb1}\\
\sum_{k=0}^{n}k\binom{n}{k}x^{k}(1-x)^{n-k}  &  =nx\label{comb2}\\
\sum_{k=0}^{n}k^{2}\binom{n}{k}x^{k}(1-x)^{n-k}  &  =nx(1-x+nx) \label{comb3}%
\end{align}
for all $x\in\mathbb{R}.$ See \cite{CN2014}, Theorem 8.8.1, p. 256).

\begin{example}
\label{ex1}Given a continuous function $\varphi:[0,1]\rightarrow\lbrack0,1]$
one associates to it the sequence of nonlinear operators $T_{n}:C\left(
[0,1]\right)  \rightarrow C\left(  [0,1]\right)  $ defined by the formulas
\[
T_{n}(f)(x)=\sum_{k=0}^{n}p_{n,k}(\varphi(x))\sup_{[k/(n+1)\leq t\leq
,(k+1)/(n+1)]}f(t).
\]
Clearly, these operators are sublinear, strongly translatable, monotone and
unital $($the last property being a consequence of formula \emph{(\ref{comb1}%
)}$)$. We will show \emph{(}using Theorem \emph{\ref{thm3})} that the sequence
$(T_{n})_{n}$ is pointwise convergent to the linear positive and unital
operator
\[
A:C\left(  [0,1]\right)  \rightarrow C\left(  [0,1]\right)  ,\text{\quad
}A(f)=f\circ\varphi.
\]
The condition \emph{(\ref{hyp>0})} is trivial since $A$ is unital. The
fulfillment of condition \emph{(\ref{hypA})} is a consequence of the fact that
$A$ is an algebra homomorphism preserving the unit; see Remark
\emph{\ref{rem2}}. Using the identities \emph{(\ref{comb1})-(\ref{comb3}),}
one can show that $T_{n}(f)\rightarrow A(f)$ uniformly for each of the
functions $1$, $-x$ and $x^{2}$\emph{;} this set of test functions suffices
since $[0,1]\subset\mathbb{R}_{+}.$ Therefore the operators $T_{n}$ and $A$
fulfill the hypotheses of Theorem \emph{\ref{thm3} }and we can conclude that
$T_{n}(f)\rightarrow A(f),$ uniformly for all $f\in C([0,1]).$
\end{example}

\begin{example}
\label{ex2}As above, $\varphi:[0,1]\rightarrow\lbrack0,1]$ is a continuous
function. Attached to it is the sequence of Kantorovich operators $K_{n}%
:L^{1}([0,1])\rightarrow L^{1}\left(  [0,1]\right)  ,$ defined by the formulas%
\[
(K_{n}f)(x)=(n+1)\sum_{k=0}^{n}p_{n,k}(\varphi(x))\int_{k/(n+1)}^{\left(
k+1\right)  /(n+1)}f(t)\mathrm{d}t
\]
and also the operator
\[
A:L^{1}\left(  [0,1]\right)  \rightarrow L^{1}\left(  [0,1]\right)
,\text{\quad}A(f)=f\circ\varphi.
\]
Clearly, these operators are linear, positive and unital. Also,%
\[
\left\Vert K_{n}f\right\Vert _{1}\leq\int_{0}^{1}\left\vert f(t)\right\vert
dt\text{\quad for all }f\in L^{1}([0,1])\text{ and }n\in\mathbb{N},
\]
and the operator $A$ verifies the technical condition \emph{(\ref{hypAA})} for
$\alpha=0$, that is,%
\[
A(1)A(x^{2})=\sum_{k=1}^{N}(A(x))^{2}.
\]

The operators $T_{n}:L^{1}([0,1])\rightarrow L^{1}\left(  [0,1]\right)  $
given by
\[
T_{n}(f)=\sup\left\{  K_{n}(f),K_{n+1}(f)\right\}
\]
are sublinear, monotone and strongly translatable. Simple computations
\emph{(}based on the identities \emph{(\ref{comb1})-(\ref{comb3})), }show
that
\begin{align*}
K_{n}(1)  &  =1,\\
K_{n}(-x)  &  =\frac{-1}{2(n+1)}\sum_{k=0}^{n}(2k+1)p_{n,k}(\varphi
(x))=-\frac{n\varphi(x)}{n+1}-\frac{1}{2(n+1)}%
\end{align*}
and%
\[
K_{n}(x^{2})=\frac{n\left(  n-1\right)  \varphi^{2}(x)}{\left(  n+1\right)
^{2}}+\frac{2n\varphi(x)}{\left(  n+1\right)  ^{2}}+\frac{1}{3\left(
n+1\right)  ^{2}},
\]
which imply that $T_{n}(f)\rightarrow A(f)$ uniformly for each of the
functions $f\in\left\{  1,~-x,\text{ }x^{2}\right\}  $\emph{ (}and thus in
the\emph{ }$L^{1}$-norm\emph{). }According to Theorem \emph{\ref{thm4}}, this
property of convergence occurs for all functions $f\in L^{1}\left(
[0,1]\right)  .$
\end{example}

\begin{example}
\label{ex3}Let $\varphi$ and $A$ as in the preceding example and consider the
sequence of Choquet-Kantorovich operators $T_{n}:L^{1}\left(  [0,1]\right)
\rightarrow L^{1}\left(  [0,1]\right)  $ defined by the formulas%
\[
T_{n}(f)(x)=\sum_{k=0}^{n}p_{n,k}(\varphi(x))\cdot\frac{(C)\int_{k/(m+1)}%
^{(k+1)/(n+1)}f(t)\mathrm{d}\mu(t)}{\mu([k/(n+1),(k+1)/(n+1)])},
\]
where the letter $C$ in front of the integral means that we are dealing with a
Choquet's integral, in our case, the integral with respect to the capacity
$\mu$ representing a distortion of the Lebesgue measure $m$ via the formula
$\mu(A)=g(m(A)),$ where $g:[0,1]\rightarrow\lbrack0,1]$ is strictly
increasing, differentiable and concave function such that $g(0)=0$ and
$g(1)=1$. The theory developed by Gal and Trifa \emph{\cite{GT}} assures the
applicability of Theorem \emph{\ref{thm4}} to conclude that $T_{n}%
(f)\rightarrow A(f)$ for all functions $f\in L^{1}\left(  [0,1]\right)  .$
\end{example}

Examples involving functions of several variables can be exhibited using
tensor products of operators. For instance, in the case of Example \ref{ex1}
by the tensor product method we get
\begin{multline*}
T_{n}(f)(x,y)=\sum_{k=0}^{n}\sum_{j=0}^{n}p_{n,k}(\varphi(x))p_{n,j}%
(\varphi(y))\\
\cdot\sup\{f(t,s);t\in\lbrack k/(n+1),(k+1)/(n+1)],v\in\lbrack
j/(n+1),(j+1)/(n+1)]\},
\end{multline*}
which for any $f\in C([0,1]\times\lbrack0,1])$ and $A(f)(x,y)=f(\varphi
(x),\varphi(y))$ satisfy $T_{n}(f)\rightarrow A(f)$ as $n\rightarrow\infty$,
uniformly on $[0,1]\times\lbrack0,1]$.

\section{Open problems}

We end our paper by mentioning few open problems that might be of interest to
our readers. The first one concerns a technical hypothesis made in our
Theorems \ref{thm3}-\ref{thmtrig}:

\begin{problem}
How large is the set of solutions of the functional equation%
\[
A(1)A(\sum_{k=1}^{N}\left(  \operatorname*{pr}\nolimits_{k}+\alpha\right)
^{2})=\sum_{k=1}^{N}(A(-\operatorname*{pr}\nolimits_{k}-\alpha))^{2},
\]
among all weakly nonlinear and monotone operators $A:C(K)\rightarrow C(X)?$
\end{problem}

We already noticed that any algebra homomorphisms preserving the unit $($in
particular, $A=I)$ is a solution. Is the converse true? The usual books on
functional equations (including that by Acz\'{e}l and Dhombres \cite{AD}) seem
silent in this case.

Not entirely surprising, one can prove results similar to those in this paper
by working with other classes of limit operators. The following one was
presented by the second named author in a talk given at the \emph{Fourth
Romanian} \emph{Itinerant Seminar on Mathematical Analysis and its
Applications}, Bra\c{s}ov, May 19-20, 2022:

\begin{theorem}
\label{thmW}Suppose that $K$ is a connected and compact subset of the
Euclidean space $\mathbb{R}^{N},$ $A:C(K)\rightarrow\mathbb{R}$ is a
$($possibly nonlinear$)$ monotone functional such that $A(1)=1$ and
$(T_{n})_{n}$ is a sequence of sublinear and monotone operators from $C(K)$
into $C(K)$ such that%
\[
T_{n}(f)(x)\rightarrow A(f)\cdot1\text{\quad in the sup-norm}%
\]
for each of the test functions $1,~\pm\operatorname*{pr}_{1},...,~\pm
\operatorname*{pr}_{N}~$and $\sum_{k=1}^{N}\operatorname*{pr}_{k}^{2}$. Then
this property of convergence extends to all nonnegative functions $f$ in
$C(K)$.

It occurs for all functions in $C(K)$ provided that the operators $T_{n}$ are
weakly nonlinear and monotone.

Moreover, in either case the family of test functions can be reduced to
$1,~-\operatorname*{pr}_{1},...,~\allowbreak-\operatorname*{pr}_{N}$ and
$\sum_{k=1}^{N}\operatorname*{pr}_{k}^{2}$ provided that $K$ is included in
the positive cone of $\mathbb{R}^{N}$.
\end{theorem}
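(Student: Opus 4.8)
The plan is to view this as an instance of the scheme behind Theorems \ref{thm3}--\ref{thm4}, taking $X=K$ and taking as limit operator the rank-one monotone operator $f\mapsto A(f)\cdot 1$ from $C(K)$ into $C(K)$. This operator sends the unit $1$ to the constant function $1$, which is a strictly positive element (indeed an order unit) of $C(K)$ with $\inf_K 1>0$, so the strict-positivity hypothesis of Theorems \ref{thm3}--\ref{thm4} is automatic here. I would first dispose of the routine reductions. The assertion for all $f\in C(K)$ follows from the one for $f\ge 0$ by applying the latter to $f+\|f\|_\infty\cdot 1\ge 0$ and using the translatability of $T_n$ and of $A$, so that $T_n(f+\|f\|_\infty\cdot 1)=T_n(f)+\|f\|_\infty T_n(1)$ and similarly for $A$. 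And when $K\subset\mathbb{R}_+^N$, all coordinate functions are $\ge 0$ on $K$, so only $T_n(-\operatorname{pr}_k)$ occurs in the computation below and the functions $+\operatorname{pr}_k$ may be dropped from the test set.

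For the heart of the matter, fix $f\ge 0$ and $\varepsilon>0$. Since $f$ is Korovkin absolutely continuous, there is $\delta>0$ with $|f(x)-f(z)|\le\varepsilon+\delta\|x-z\|^2$ for all $x,z\in K$. Freezing $z$ and expanding $\|x-z\|^2=\sum_k\operatorname{pr}_k(x)^2-2\sum_k z_k\operatorname{pr}_k(x)+\|z\|^2$, this reads
\[
|f-f(z)\cdot 1|\le(\varepsilon+\delta\|z\|^2)\cdot 1+\delta\sum_k\operatorname{pr}_k^2+\sum_k(-2\delta z_k)\operatorname{pr}_k
\]
in $C(K)$. Now apply $T_n$: by Theorem \ref{thmKrein}, $|T_n(f)-T_n(f(z)\cdot 1)|\le T_n(|f-f(z)\cdot 1|)$; by positive homogeneity $T_n(f(z)\cdot 1)=f(z)T_n(1)$ (recall $f(z)\ge 0$); and subadditivity together with positive homogeneity distribute $T_n$ over the right-hand side, each term $(-2\delta z_k)\operatorname{pr}_k$ yielding $2\delta(z_k)^+T_n(-\operatorname{pr}_k)+2\delta(z_k)^-T_n(\operatorname{pr}_k)$. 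Thus, for all $t,z\in K$ and all $n$,
\begin{multline*}
|T_n(f)(t)-f(z)T_n(1)(t)|\le(\varepsilon+\delta\|z\|^2)T_n(1)(t)+\delta\,T_n\Bigl(\sum_k\operatorname{pr}_k^2\Bigr)(t)\\
+2\delta\sum_k\Bigl((z_k)^+T_n(-\operatorname{pr}_k)(t)+(z_k)^-T_n(\operatorname{pr}_k)(t)\Bigr).
\end{multline*}
Letting $n\to\infty$ and using the hypothesised uniform convergence of the $T_n$ on the test set, the right-hand side tends, uniformly in $t$, to $\varepsilon+\delta R(z)$, where
\[
R(z)=\|z\|^2+2\sum_k\Bigl((z_k)^+A(-\operatorname{pr}_k)+(z_k)^-A(\operatorname{pr}_k)\Bigr)+A\Bigl(\sum_k\operatorname{pr}_k^2\Bigr),
\]
while $f(z)T_n(1)(t)\to f(z)$. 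Writing $d_z(x)=\|x-z\|^2$, the same bookkeeping gives $R(z)\ge\limsup_n\|T_n(d_z)\|_\infty\ge 0$.

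It remains to choose $z$ well. Since $A$ is monotone and positively homogeneous with $A(1)=1$, one has $A(f)\in[\min_K f,\max_K f]$; as $K$ is connected and $f$ continuous, $f(K)$ is an interval, so there exists $y^*\in K$ with $f(y^*)=A(f)$. Substituting $z=y^*$ in the estimate gives
\[
\limsup_n\|T_n(f)-A(f)\cdot 1\|_\infty\le\varepsilon+\delta\,R(y^*),
\]
and, $\varepsilon$ being arbitrary, the proof is complete \emph{provided} $R(y^*)=0$.

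The decisive step is precisely this identity $R(y^*)=0$. It says that $A$, restricted to the span of $1,\operatorname{pr}_1,\dots,\operatorname{pr}_N,\sum_k\operatorname{pr}_k^2$, coincides with the point evaluation at $y^*$; in the positive-cone case it reads $A(-\operatorname{pr}_k)=-y^*_k$ and $A(\sum_k\operatorname{pr}_k^2)=\sum_k(y^*_k)^2$, and it is the exact analogue, for the present rank-one limit operator, of the condition $A(1)A(\sum(\operatorname{pr}_k+\alpha)^2)=\sum(A(-\operatorname{pr}_k-\alpha))^2$ imposed in Theorems \ref{thm3}--\ref{thm4}. Equivalently $R(y^*)=0$ means $\lim_n\|T_n(d_{y^*})\|_\infty=0$, after which the argument becomes the classical Korovkin collapse centred at $y^*$. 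I expect this to be the main obstacle: one must deduce, from the monotonicity of $A$, the normalisation $A(1)=1$, the connectedness of $K$, and the mere existence of the approximants $(T_n)_n$, that $A$ necessarily acts as evaluation at a single point $y^*\in K$ independent of $f$ — or else one adjoins this as an explicit hypothesis, as in Theorem \ref{thm3}. That is where I would concentrate the effort.
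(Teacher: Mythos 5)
You have honestly flagged the decisive step yourself, and your diagnosis is right: the identity $R(y^{*})=0$ is a genuine gap, not a technicality. But the situation is worse than ``an obstacle on which to concentrate the effort'' --- this route cannot be repaired. Take $K=[0,1]$, $T_{n}(f)=(\sup_{K}f)\cdot 1$ for every $n$ and $A(f)=\sup_{K}f$. All hypotheses hold and the conclusion holds trivially, yet $A(-\operatorname{pr}_{1})=0$ and $A(\operatorname{pr}_{1})=A(\operatorname{pr}_{1}^{2})=1$, so your $R(z)=1+z^{2}\geq 1$ for every $z\in K$. The final bound $\varepsilon+\delta R(y^{*})$ therefore never collapses (recall $\delta=\delta(\varepsilon,f)$ is in general large), so the mechanism ``the limit functional is evaluation at a point $y^{*}$ at which the quadratic $d_{y^{*}}$ is annihilated'' is simply not what could make such a statement true; any proof would have to exploit something other than the classical Korovkin collapse centred at a single point. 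Two smaller unjustified steps: the claim $A(f)\in[\min_{K}f,\max_{K}f]$ uses $A(c\cdot 1)=c$, which does not follow from monotonicity together with $A(1)=1$ alone; and the reduction of signed $f$ to $f+\Vert f\Vert_{\infty}\cdot 1$ requires a translatability property of $A$ that is not among the hypotheses.

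For what it is worth, the paper offers no proof to compare against: Theorem \ref{thmW} is only announced in the open-problems section as the content of a seminar talk. In fact your suspicion that an extra hypothesis must be adjoined appears well founded, because the statement as printed seems to fail. With $K=[0,1]$, $A=\sup$, $T_{2m}(f)=\max\{f(0),f(1)\}\cdot 1$ and $T_{2m+1}(f)=(\sup_{K}f)\cdot 1$, every $T_{n}$ is sublinear, monotone and even strongly translatable, and $T_{n}(g)\rightarrow A(g)\cdot 1$ for each of the test functions $1,\ \pm\operatorname{pr}_{1},\ \operatorname{pr}_{1}^{2}$ (the four limits being $1,1,0,1$), yet for the nonnegative function $f(x)=x(1-x)$ the sequence $T_{n}(f)$ oscillates between $0$ and $\tfrac{1}{4}$. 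So the test-set data alone do not determine $\lim_{n}T_{n}(f)$ without further structure (in the Weyl model the $T_{n}$ are Ces\`{a}ro averages of a single rotation, which supplies the missing asymptotic invariance). Your write-up is a faithful adaptation of Lemma \ref{lem3} as far as it goes, but the theorem cannot be reached from these hypotheses by that estimate --- and apparently not at all without strengthening them.
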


Theorem \ref{thmW} is a nonlinear extension of the Weyl ergodic theorem. This
theorem, whose essence is the \emph{unique ergodicity }of the irrational
rotation%
\[
R_{\alpha}(x)=\left(  x+\alpha\right)  \text{ }\operatorname*{mod}2\pi
\quad(\alpha/\pi\notin\mathbb{R}\backslash\mathbb{Q}),
\]
asserts the uniform convergence of the ergodic averages $\frac{1}{n}%
\sum\nolimits_{k=0}^{n-1}f\circ(R_{\alpha})^{k}$ of any $f\in C_{2\pi
}(\mathbb{R})$ to the constant function $\frac{1}{2\pi}\int_{-\pi}^{\pi
}f\mathrm{d}\mu.$ See Oxtoby \cite{Oxt} and Parry \cite{Parry} for details.
The Weyl ergodic theorem represents the particular case of Theorem \ref{thmW}
when $C(K)=C_{2\pi}(\mathbb{R}),$%
\[
T_{n}(f)=\frac{1}{n}\sum\nolimits_{k=0}^{n-1}f\circ(R_{\alpha})^{k}\text{ and
}A(f)=\left(  \frac{1}{2\pi}\int_{-\pi}^{\pi}f\mathrm{d}\mu\right)  \cdot1.
\]
.

The classical proof of the Weyl ergodic theorem consists in verifying it in
the case of exponentials $e^{inx}$ and next in applying the trigonometric form
of the Weierstrass approximation theorem. Theorem (\ref{thmW}) reduces the set
of test functions to the triplet $1$, $\cos$ and $\sin!$

Notice that Theorem \ref{thmtrig} does not apply in this case because the
technical hypothesis (\ref{trig+}) fails.

\begin{problem}
Find other substitutes for the technical condition \emph{\ref{hypA}} that
avoids the presence of the operation of multiplication in the codomain.
\end{problem}

Another problem left open is the following one:

\begin{problem}
Find an extension of Theorem \emph{\ref{thm4}} to the case of operators
defined on the Lebesgue spaces $L^{p}(\mu)$ associated to a finite measure
$\mu$ on $\mathbb{R}^{N}.$ Of a special interest is the case of the Gaussian
measure $\frac{1}{\left(  \sqrt{2\pi}\right)  ^{N}}e^{-\left\Vert x\right\Vert
^{2}/2}\mathrm{d}x$.
\end{problem}

\end{document}